\theoremstyle{definition}
\newtheorem{definition}{Definition}
\newtheorem{remark}{Remark}
\theoremstyle{plain}
\newtheorem{theorem}{Theorem}
\newtheorem{lemma}[definition]{Lemma}
\newtheorem{corollary}{Corollary}
\theoremstyle{remark}
\newcommand{\C}{\mathbb{C}}
\renewcommand{\H}{\mathbb{H}}
\newcommand{\K}{\mathbb{K}}
\newcommand{\N}{\mathbb{N}}
\newcommand{\Pb}{\mathbb{P}}
\newcommand{\Q}{\mathbb{Q}}
\newcommand{\R}{\mathbb{R}}
\newcommand{\Sbb}{\mathbb{S}}
\newcommand{\Z}{\mathbb{Z}}
\newcommand{\Acal}{\mathcal{A}}
\newcommand{\Jcal}{\mathcal{J}}
\newcommand{\Mcal}{\mathcal{M}}
\newcommand{\Scal}{\mathcal{S}}
\newcommand{\Tcal}{\mathcal{T}}
\newcommand{\trace}{\operatorname{trace}\,}
\newcommand{\diag}{\operatorname{diag}\,}
\newcommand{\sgn}{\operatorname{sgn}\,}
\newcommand{\oh}{{\scriptstyle{{\cal O}}}}
\let\leq\leqslant
\let\geq\geqslant
\begin{document}

\begin{center}
\begin{huge}
\begin{spacing}{1.0}
\textbf{On Hermitian Eisenstein Series of Degree $2$}  
\end{spacing}
\end{huge}

\bigskip
by
\bigskip

\begin{large}
\textbf{Adrian Hauffe-Waschbüsch\footnote{Adrian Hauffe-Waschbüsch, Lehrstuhl A für Mathematik, RWTH Aachen University, D-52056 Aachen, adrian.hauffe@rwth-aachen.de}}, 
\textbf{Aloys Krieg\footnote{Aloys Krieg, Lehrstuhl A für Mathematik, RWTH Aachen University, D-52056 Aachen, krieg@rwth-aachen.de}} and 
\textbf{Brandon Williams\footnote{Brandon Williams, Lehrstuhl A für Mathematik, RWTH Aachen University, D-52056 Aachen, brandom.williams@matha.rwth-aachen.de}}
\end{large}
\vspace{0.5cm}\\
May 2022
\vspace{1cm}
\end{center}
\bigskip\textbf{Abstract.} 
We consider the Hermitian Eisenstein series $E^{(\K)}_k$ of degree $2$ and weight $k$ associated with an imaginary-quadratic number field $\K$ and determine the influence of $\K$ on the arithmetic and the growth of its Fourier coefficients. We find that they satisfy the identity $E^{{(\K)}^2}_4 = E^{{(\K)}}_8$, which is well-known for Siegel modular forms of degree $2$, if and only if $\K = \Q (\sqrt{-3})$. As an application, we show that the Eisenstein series $E^{(\K)}_k$, $k=4,6,8,10,12$ are algebraically independent whenever $\K\neq \Q(\sqrt{-3})$. The difference between the Siegel and the restriction of the Hermitian to the Siegel half-space is a cusp form in the Maaß space that does not vanish identically for sufficiently large weight; however, when the weight is fixed, we will see that it tends to $0$ as the discriminant tends to $-\infty$. Finally, we show that these forms generate the space of cusp forms in the Maaß Spezialschar as a module over the Hecke algebra as $\K$ varies over imaginary-quadratic number fields.
\bigskip

\noindent\textbf{Keywords:} Hermitian Eisenstein series, Siegel Eisenstein series, Maaß space  \\[1ex]
\noindent\textbf{Classification MSC 2020:} 11F46, 11F55
\vspace{2ex}\\

\newpage
\section{Introduction}

Eisenstein series are the most common examples of modular forms in several variables. In the case of Hermitian modular forms associated with an imaginary-quadratic number field $\K$, they were introduced by H. Braun \cite{B2}. In this paper we consider Hermitian Eisenstein series of degree $2$. Its Fourier expansion is determined by the Maaß condition and has been worked out explicitly (cf. \cite{K4}, \cite{H-WK20}).

This knowledge leads to new insights on the influence of $\K$ on the arithmetic and the growth of the Fourier coefficients. We will demonstrate that the Eisenstein series $E^{(\K)}_k$ satisfy the identity $E^{{(\K)}^2}_4 = E^{(\K)}_8$, whose analogue for Siegel modular forms of degree $2$ is well known, if and only if $\K=\Q(\sqrt{-3})$. This allows us to show that the  Eisenstein series $E^{(\K)}_k$, $k=4,6,8,10,12,$ are algebraically independent whenever $\K\neq \Q(\sqrt{-3})$.

Finally we consider the difference between $E^{(\K)}_k$ restricted to the Siegel half-space and the Siegel Eisenstein series of weight $k$. This is a Siegel cusp form in the Maaß space. When the weight $k$ is fixed, its limit is $0$ as the discriminant of $\K$ tends to $-\infty$. On the other hand, it does not vanish identically whenever the weight is sufficiently large. Moreover the vanishing of the above difference can be characterized by the vanishing of a Shimura lift. This allows us to show that the subspace of cusp forms in the Maaß space is generated by these restrictions as a module over the Hecke algebra, when $\K$ varies over all imaginary-quadratic number fields.

\section{An identity in weight $8$}

The \emph{Hermitian half-space} $\H_2$ and the \emph{Siegel half-space} $\Sbb_2$ of degree $2$ are given by 
\[\,
 \H_2: = \left\{ Z\in\C^{2\times 2};\;\tfrac{1}{2i} (Z-\overline{Z}^{tr})> 0\right\} \supset \Sbb_2 : = \left\{Z\in\H_2;\; Z=Z^{tr}\right\}.
\]
Throughout the paper we let $\K$ be an imaginary-quadratic number field of discriminant $\Delta = \Delta_\K$ with ring of integers $\oh_\K$ and inverse different $\oh_\K^\sharp = \oh_\K \big/\sqrt{\Delta_\K}$. If $D$ is a fundamental discriminant, let $\chi_D$ denote the associated Dirichlet character; in particular, $\chi_\K = \chi_{\Delta}$. The \emph{Hermitian modular group} of degree $2$ is
\[
 \Gamma^{(\K)}_2:  = \left\{M\in \oh^{4\times 4}_\K;\; \overline{M}^{tr}JM = J,\, \det M = 1\right\}, \; 
 J=\begin{pmatrix}
    0 & -I \\ I & 0
   \end{pmatrix},\;
I = \begin{pmatrix}
    1 & 0 \\ 0 & 1
   \end{pmatrix}.
\]
Then
\[
\Gamma_2:  =\Gamma^{(\K)}_2 \cap \R^{4\times 4}
\]
is the \emph{Siegel modular group} of degree $2$. The space $\Mcal_k(\Gamma^{(\K)}_2)$ of Hermitian modular forms of weight $k$ consists of all holomorphic functions $F:\H_2\to \C$ satisfying
\[
 F\bigl((AZ+B)(CZ+D)^{-1}\bigr) = \det (CZ+D)^k F(Z) \;\;\text{for all}\;\; 
 \left(\begin{smallmatrix}
        A & B\\ C & D
       \end{smallmatrix}\right) \in \Gamma^{(\K)}_2.
\]
Any such $F$ has a Fourier expansion of the form
\[
 F(Z) = \sum_{T\in\Lambda_2, \, T\geq 0} \alpha_F(T) e^{2\pi i \trace(TZ)},
\]
where 
\[
 \Lambda_2 = \left\{ T = \begin{pmatrix}
                          n & t \\ \overline{t} & m
                         \end{pmatrix};\; m,n\in \N_0,\,t\in\oh^\sharp_\K\right\}.
\]
If $\varepsilon(T): = \max\{\ell\in\N;\; \frac{1}{\ell}T \in \Lambda_2\}$ for $T\neq 0$, we can define the Hermitian Eisenstein series of even weight $k\geq 4$ due to \cite{K4} and \cite{H-WK20} as a Maaß lift via
\begin{gather*}\tag{1}\label{gl_1}
 E^{(\K)}_k (Z) = 1 + \sum_{0\neq T\in\Lambda_2,\,T\geq 0} \;\sum_{d|\varepsilon(T)} d^{k-1} \alpha^*_k(|\Delta|\det T/d^2) e^{2\pi i \trace(TZ)}, \;\; Z\in\H_2,
\end{gather*}
where $\alpha^*_k = \alpha^*_{k,\Delta}$ is given by
\begin{gather*}\tag{2}\label{gl_2}
 \alpha^*_k(\ell) =
 \begin{cases}
  0, & \text{if}\;\ell\neq 0,\; a_\Delta (\ell) = 0,  \\
  -2k/B_k, & \text{if}\;\ell = 0,  \\
  r_{k,\Delta} \sum\limits_{t|\ell} \varepsilon^{(\Delta)}_{t,\ell} (\ell/t)^{k-2},
  & \text{if}\;\ell> 0, \;a_\Delta(\ell)\neq 0, \\
 \end{cases}
\end{gather*}
where 
\begin{align*}
 r_{k,\Delta} & = \frac{-4k(k-1)}{B_k B_{k-1,\chi}} > 0,   \\
 \varepsilon^{(\Delta)}_{t,\ell} & = \frac{1}{a_\Delta (\ell)} \sum\limits_{\substack{D_1D_2=\Delta\\D_j\,\text{fund. discr.}}} \chi_{D_1}(t)\chi_{D_2}(-\ell/t), \tag{3} \label{gl_3}\\
 a_\Delta(\ell)& = \sharp\{u:\oh^\sharp_\K/\oh_\K;\;\Delta u\overline{u} \equiv \ell \bmod \Delta\} = \prod^r_{j=1}(1+\chi_j(-\ell)),
\end{align*}
if $ \Delta=\Delta_1 \cdot \ldots\cdot \Delta_r$ is the decomposition into prime discriminants and $\chi_j = \chi_{\Delta_j}$.
If $\ell \in\N$ and $a_\Delta (\ell)> 0$, then any $t\mid \ell$ satisfies 
\begin{gather*}\tag{4}\label{gl_4}
\begin{split}
\varepsilon^{(\Delta)}_{t,\ell} & = \prod^r_{j=1} \frac{\chi_j(t) + \chi_j(-\ell/t)}{1+\chi_j(-\ell)}  \\
& = \prod_{j:\gcd(t,\Delta_j)=1} \chi_j(t) \cdot \prod_{j:\gcd(t,\Delta_j)>1} \chi_j(-\ell/t).
\end{split}
\end{gather*}

If $k>4$ is even, we have the absolutely convergent series
\[
 E^{(\K)}_k (Z) = \sum_{\left(\begin{smallmatrix}
                         A & B \\ C & D
                        \end{smallmatrix}\right):\left(\begin{smallmatrix}
					\ast & \ast \\ 0 & \ast
				      \end{smallmatrix}\right)\big\backslash \Gamma^{(\K)}_2}
\det(CZ+D)^{-k}, \;\; Z\in \H_2.
\]
We derive a first result on the growth and the arithmetic of the Fourier coefficients depending on $\K$.

\begin{theorem}\label{theorem_1neu} 
 Let $\K$ be an imaginary-quadratic number field and let $k\geq 4$ be even. Then 
 \begin{flalign*}
 & \text{a)} \qquad  \varepsilon^{(\Delta)}_{t,\ell} = \chi_{D'_t}(t) \chi_{D_t}(-\ell/t) &
 \end{flalign*}
 holds for all $t|\ell$, $\ell\in \N$, $a_\Delta(\ell)>0$, where $D_t$, $D'_t$ are fundamental discriminants satisfying $D_t D'_t = \Delta$, $|D_t|= \gcd(t^\infty,\Delta)$.
 \begin{flalign*}
 & \text{b)} \qquad 0 < r_{k,\Delta} (2-\zeta(k-2)) \ell^{k-2} \leq \alpha^*_{k,\Delta}(\ell) \leq r_{k,\Delta} \zeta(k-2) \ell^{k-2}  &
 \end{flalign*}
 holds for all $\ell\in\N$ with $a_\Delta(\ell) > 0$.  \\
 c) One has 
 \begin{align*}
 \begin{split}
  0  & <  \frac{(2\pi)^{2k-1}}{\zeta(k-1)\zeta(k)(k-2)!(k-1)!} \cdot \frac{1}{|\Delta|^{k-3/2}} \\
  & \leq r_{k,\Delta} \leq  \frac{(2\pi)^{2k-1}}{(2-\zeta(k-1))\zeta(k)(k-2)!(k-1)!}\cdot \frac{1}{|\Delta|^{k-3/2}}.
   \end{split} 
 \end{align*}
 d) If $\ell_1,\ell_2\in\N$ are coprime with $a_\Delta(\ell_j) >0$ and $\gcd(\ell_1\ell_2,\Delta)=1$, then
 \[
  \alpha^*_{k,\Delta}(\ell_1)\cdot \alpha^*_{k,\Delta}(\ell_2) = r_{k,\Delta} \cdot\alpha^*_{k,\Delta}(\ell_1\ell_2).
 \]
\end{theorem}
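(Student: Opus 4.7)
The theorem has four parts. Part (a) is a bookkeeping identity on Dirichlet characters, (b) and (d) fall out of (a) with short calculations, and (c) is the analytic heart of the result and the place I expect the main obstacle. I will treat them in order. For (a), I start from the product formula (\ref{gl_4}): since the prime discriminants $\Delta_1,\dots,\Delta_r$ of $\Delta$ are pairwise coprime, I partition the index set $\{1,\dots,r\}$ according to whether $\gcd(t,\Delta_j)=1$ or $>1$. Setting $D'_t:=\prod_{\gcd(t,\Delta_j)=1}\Delta_j$ and $D_t:=\prod_{\gcd(t,\Delta_j)>1}\Delta_j$ produces a factorisation $\Delta=D_tD'_t$ in which $|D_t|$ is exactly the part of $\Delta$ supported on primes dividing $t$, i.e.\ $|D_t|=\gcd(t^\infty,\Delta)$. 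Multiplicativity of the Kronecker symbol on coprime fundamental discriminants then identifies the two sub-products in (\ref{gl_4}) with $\chi_{D'_t}(t)$ and $\chi_{D_t}(-\ell/t)$, yielding (a).

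Part (b) is an easy consequence of (a). Each Kronecker symbol takes values in $\{-1,0,1\}$, so $|\varepsilon^{(\Delta)}_{t,\ell}|\leq 1$ for every $t\mid \ell$, and for $t=1$ one has $D_1=1$ and $D'_1=\Delta$, hence $\varepsilon^{(\Delta)}_{1,\ell}=1$. Splitting off the $t=1$ term in the divisor sum defining $\alpha^*_{k,\Delta}(\ell)$ and bounding the tail by
\[
\sum_{t\mid\ell,\,t>1}(\ell/t)^{k-2}\leq \ell^{k-2}\sum_{s\geq 2}s^{-(k-2)}=(\zeta(k-2)-1)\,\ell^{k-2}
\]
gives both inequalities of (b) at once, and in particular confirms $\alpha^*_{k,\Delta}(\ell)>0$.

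The main work, and the place I expect the main obstacle, is (c). The plan is to use the functional equations of $\zeta$ and of $L(\cdot,\chi_\Delta)$ to rewrite both Bernoulli numbers appearing in $r_{k,\Delta}$ as $L$-values:
\[
|B_k|=\frac{2\,k!\,\zeta(k)}{(2\pi)^k},\qquad |B_{k-1,\chi_\Delta}|=\frac{2\,(k-1)!\,|\Delta|^{k-3/2}}{(2\pi)^{k-1}}\,L(k-1,\chi_\Delta).
\]
The first identity is standard; the second uses that $\chi_\Delta$ is an odd real primitive character of conductor $|\Delta|$ with Gauss sum $\tau(\chi_\Delta)=i\sqrt{|\Delta|}$, and the delicate part will be tracking the signs and $\Gamma$-factors carefully enough to extract the clean absolute value, and to confirm $L(k-1,\chi_\Delta)>0$ (the latter is in any case immediate from the Euler product for $k\geq 2$). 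Substituting into $r_{k,\Delta}=-4k(k-1)/(B_kB_{k-1,\chi_\Delta})$ telescopes to
\[
r_{k,\Delta}=\frac{(2\pi)^{2k-1}}{(k-2)!(k-1)!\,\zeta(k)\,L(k-1,\chi_\Delta)\,|\Delta|^{k-3/2}},
\]
and the stated inequalities then reduce to the sandwich $2-\zeta(k-1)\leq L(k-1,\chi_\Delta)\leq \zeta(k-1)$, both obtained by the triangle inequality applied to $L(k-1,\chi_\Delta)=1+\sum_{n\geq 2}\chi_\Delta(n)/n^{k-1}$. The hypothesis $k\geq 4$ gives $\zeta(k-1)\leq \zeta(3)<2$, so the lower bound is positive.

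Finally, (d) is a short consequence of (a). Under the coprimality hypotheses every divisor $t$ of $\ell_1\ell_2$ is coprime to $\Delta$, so $D_t=1$ and $D'_t=\Delta$, giving $\varepsilon^{(\Delta)}_{t,\ell_1\ell_2}=\chi_\Delta(t)$. Hence $\alpha^*_{k,\Delta}(\ell)/r_{k,\Delta}=\sum_{t\mid\ell}\chi_\Delta(t)(\ell/t)^{k-2}$ is the classical twisted divisor sum, which is multiplicative in $\ell$; factoring the sum for $\ell_1\ell_2$ as a product over divisors of $\ell_1$ and $\ell_2$ completes the argument.
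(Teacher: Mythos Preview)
Your proof is correct and follows essentially the same route as the paper's own proof, which is extremely terse: the paper simply records the sign $\sgn(B_kB_{k-1,\chi})=-1$, the identity $|B_k|=2k!\zeta(k)/(2\pi)^k$, the two-sided bound on $|B_{k-1,\chi}|$ (equivalent to your sandwich $2-\zeta(k-1)\leq L(k-1,\chi_\Delta)\leq \zeta(k-1)$), and the reduction $\varepsilon^{(\Delta)}_{t,\ell}=\chi_\K(t)$ for $\gcd(t,\Delta)=1$, and then says the claim follows from (\ref{gl_2})--(\ref{gl_4}). You have supplied precisely the computations that the paper omits.
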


\begin{proof}
We observe that
\[
  \sgn(B_kB_{k-1,\chi}) = \chi_\K(-1)=-1,
 \]
\begin{gather*}\tag{5}\label{gl_5}
 |B_k| = \frac{2k!\zeta(k)}{(2\pi)^k},  
\end{gather*}\,
\begin{gather*}\tag{6}\label{gl_6}
 \frac{2(k-1)! |\Delta|^{k-3/2}}{(2\pi)^{k-1}} (2-\zeta(k-1)) \leq |B_{k-1,\chi}| \leq \frac{2(k-1)! |\Delta|^{k-3/2}}{(2\pi)^{k-1}} \zeta(k-1),
\end{gather*} 
 \[
   \varepsilon^{(\Delta)}_{t,\ell} = \chi_\K(t), \;\;\text{if}\;\; \gcd(t,\Delta)=1.
 \]
Then the claim follows from \eqref{gl_2}, \eqref{gl_3} and \eqref{gl_4}.
\end{proof}

Inserting estimates for the Riemann zeta function we get
\begin{alignat*}{2}
 \frac{8\,792}{\sqrt{|\Delta|}} & \leq \alpha^*_4(|\Delta|) & & \leq \frac{61\,362}{\sqrt{|\Delta|}},  \\[1ex]
  \frac{181\,995}{\sqrt{|\Delta|}} & \leq \alpha^*_6(|\Delta|) & & \leq \frac{231\,109}{\sqrt{|\Delta|}},  \\[1ex]
  \frac{251\,164}{\sqrt{|\Delta|}} & \leq \alpha^*_8(|\Delta|) & & \leq \frac{264\,410}{\sqrt{|\Delta|}},  \tag{7}\label{gl_7}\\[1ex]
  \frac{99\,324}{\sqrt{|\Delta|}}  & \leq \alpha^*_{10}(|\Delta|) & & \leq \frac{100\,541}{\sqrt{|\Delta|}}, \\[1ex]
  \frac{15\,720}{\sqrt{|\Delta|}} & \leq \alpha^*_{12}(|\Delta|) & & \leq \frac{15\,768}{\sqrt{|\Delta|}}. 
\end{alignat*}

\begin{corollary}\label{corollary_1neu} 
Let $\K$ be an imaginary-quadratic number field. Then 
 \[
 E^{{(\K)}^2}_4 = E^{(\K)}_8 
\] 
holds if and only if $\K=\Q(\sqrt{-3})$.
\end{corollary}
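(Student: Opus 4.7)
My plan is to reduce the identity to a numerical equation in the Fourier coefficients at the single matrix $T_0 := \left(\begin{smallmatrix}1 & 0 \\ 0 & 1\end{smallmatrix}\right) \in \Lambda_2$, rule out all but two discriminants by means of the bounds in \eqref{gl_7}, and then handle the two remaining cases separately.

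First, a short combinatorial argument shows that the only decompositions $T_0 = T_1 + T_2$ with $T_j \in \Lambda_2$, $T_j \geq 0$ are the two trivial splittings and the symmetric pair $T_1 = \left(\begin{smallmatrix}1 & 0 \\ 0 & 0\end{smallmatrix}\right)$, $T_2 = \left(\begin{smallmatrix}0 & 0 \\ 0 & 1\end{smallmatrix}\right)$ (and vice versa): positivity of $T_1$ and of $T_2 = T_0 - T_1$ forces the off-diagonal entry to vanish and the diagonal entries to lie in $\{0,1\}$. Since $\varepsilon(T_0) = 1$, the coefficient of $E_8^{(\K)}$ at $T_0$ equals $\alpha^*_8(|\Delta|)$, while the convolution formula combined with $\alpha^*_4(0) = -8/B_4 = 240$ shows that the coefficient of $E_4^{(\K)^2}$ at $T_0$ equals $2\alpha^*_4(|\Delta|) + 2\cdot 240^2$. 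The identity of modular forms thus forces the necessary condition
\[
 \alpha^*_8(|\Delta|) - 2\alpha^*_4(|\Delta|) = 115200.
\]

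Applying the upper bound on $\alpha^*_8(|\Delta|)$ and the lower bound on $\alpha^*_4(|\Delta|)$ from \eqref{gl_7} yields $\alpha^*_8(|\Delta|) - 2\alpha^*_4(|\Delta|) \leq (264410 - 2\cdot 8792)/\sqrt{|\Delta|} = 246826/\sqrt{|\Delta|}$, so the equation above forces $\sqrt{|\Delta|} < 246826/115200 < 2.15$ and hence $|\Delta| \in \{3, 4\}$. This eliminates every imaginary-quadratic field $\K$ apart from $\Q(\sqrt{-3})$ and $\Q(i)$. For the two remaining cases I would evaluate \eqref{gl_2}--\eqref{gl_4} explicitly. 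Using the generalised Bernoulli numbers $B_{3,\chi_{-4}} = 3/2$ and $B_{7,\chi_{-4}} = 427/2$, the necessary condition fails for $\K = \Q(i)$, completing the ``only if'' direction. Using $B_{3,\chi_{-3}} = 2/3$ and $B_{7,\chi_{-3}} = 98/3$, one finds $\alpha^*_4(3) = 17280$ and $\alpha^*_8(3) = 149760 = 2\cdot 17280 + 115200$, so the necessary condition is satisfied for $\K = \Q(\sqrt{-3})$ at $T_0$.

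The main obstacle is then promoting this single-coefficient agreement to a full identity of modular forms for $\K = \Q(\sqrt{-3})$. My plan is to observe that $E_4^{(\K)^2} - E_8^{(\K)}$ is a Hermitian cusp form of weight $8$ for $\Gamma_2^{(\Q(\sqrt{-3}))}$ (its constant Fourier coefficient is $1 - 1 = 0$) and to invoke the structure theorem of Dern--Krieg, which realises the graded ring of Hermitian modular forms for $\Gamma_2^{(\Q(\sqrt{-3}))}$ as a polynomial ring in algebraically independent generators of weights $4, 6, 9, 10, 12$; its weight-$8$ component is therefore one-dimensional, and the difference must vanish. A self-contained alternative would require an independent verification that $\dim \Mcal_8(\Gamma_2^{(\Q(\sqrt{-3}))}) = 1$, for instance via the Maa{\ss} correspondence with an elliptic modular form space of small weight for $\Gamma_0(3)$ with character $\chi_{-3}$.
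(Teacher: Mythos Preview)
Your argument is correct and follows the same route as the paper: compare the Fourier coefficient at $T_0=I$, use the bounds \eqref{gl_7} to reduce to $|\Delta|\in\{3,4\}$, dispose of $\Delta=-4$ by direct computation, and invoke the Dern--Krieg structure theorem for $\Delta=-3$. One small point: your parenthetical justification that $E_4^{(\K)^2}-E_8^{(\K)}$ is a cusp form because its constant term vanishes is not sufficient in degree~$2$ (one needs all Fourier coefficients at singular $T$ to vanish, which does hold here via the $\Phi$-operator and the elliptic identity $E_4^2=E_8$), but this is harmless since your dimension argument $\dim\Mcal_8\bigl(\Gamma_2^{(\Q(\sqrt{-3}))}\bigr)=1$ already forces the two normalized weight-$8$ forms to coincide without any appeal to cuspidality.
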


\begin{proof}
 If $\K= \Q(\sqrt{-3})$, then the identity follows from \cite{DeKr03}, Theorem 6. Suppose $|\Delta|\geq 4$. The Fourier coefficient of $I$ in $E^{{(\K)}^2}_4 - E^{(\K)}_8$ is
\[
   2 \alpha^*_4 (|\Delta|) + 2\alpha^*_4 (0)^2 -\alpha^*_8(|\Delta|) 
   \geq \frac{17\,584}{\sqrt{|\Delta|}} + 115\,200 - \frac{264\,410}{\sqrt{|\Delta|}} > 0,
\]
according to \eqref{gl_7}, whenever $|\Delta|\geq 5$. For $\Delta = -4$ a direct computation shows that this Fourier coefficient is nonzero.
\end{proof}

Clearly the restriction of $E^{{(\K)}^2}_4 - E^{(\K)}_8$ to $\Sbb_2$ vanishes because $\dim \Mcal_8(\Gamma_2)=1$. If $\K = \Q(\sqrt{-1})$ then \cite{DeKr03}, Theorem 10, yields
\begin{gather*}\tag{8}\label{gl_8}
 E^{{(\K)}^2}_4 - E^{(\K)}_8 = c\,\phi^2_4 \;\;\text{for}\;\; c = 230\,400/61.
\end{gather*}

\begin{corollary}\label{corollary_1} 
 Let $\K = \Q(\sqrt{-1})$. Then the graded ring of symmetric Hermitian modular forms with respect to the maximal discrete extension of $\Gamma^{(\K)}_2$ is the polynomial ring in 
 \[
  E^{(\K)}_k, \;\; k= 4,6,8,10,12.
 \]
\end{corollary}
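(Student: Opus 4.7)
The plan is to reduce the claim to the structure theorem of Dern and Krieg \cite{DeKr03} for the graded ring of symmetric Hermitian modular forms over $\Q(\sqrt{-1})$ with respect to the maximal discrete extension of $\Gamma^{(\K)}_2$. That theorem asserts this ring is a polynomial algebra on five algebraically independent generators of weights $4, 6, 8, 10, 12$; in their explicit presentation, the generators in weights $4, 6, 10, 12$ are the Hermitian Eisenstein series $E^{(\K)}_4, E^{(\K)}_6, E^{(\K)}_{10}, E^{(\K)}_{12}$, while the weight-$8$ generator is the cusp form $\phi_4^2$, where $\phi_4$ is the Borcherds product of weight $4$ already featuring in \eqref{gl_8}.

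The decisive step is then to swap $\phi_4^2$ for $E^{(\K)}_8$ in the generating set. This is made possible precisely by identity \eqref{gl_8}, $E^{{(\K)}^2}_4 - E^{(\K)}_8 = c\,\phi_4^2$ with $c = 230\,400/61 \neq 0$. Since $E^{(\K)}_4$ is already present, this equation expresses $\phi_4^2$ as a polynomial in $E^{(\K)}_4$ and $E^{(\K)}_8$, and conversely $E^{(\K)}_8$ as a polynomial in $E^{(\K)}_4$ and $\phi_4^2$. Hence $\phi_4^2$ and $E^{(\K)}_8$ generate the same subalgebra over $\C[E^{(\K)}_4]$, so the two generating sets produce the same graded ring. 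The substitution $\phi_4^2\leftrightarrow E^{(\K)}_8$ is an invertible polynomial change of variables over $\C[E^{(\K)}_4]$ with nonzero constant Jacobian $-c$, which preserves algebraic independence and transcendence degree; therefore the new generators $\{E^{(\K)}_k\mid k=4,6,8,10,12\}$ are again algebraically independent and freely generate the ring.

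The only point requiring any care is to confirm that the normalization of the Eisenstein series used in \eqref{gl_1}--\eqref{gl_3} agrees with the normalization employed by \cite{DeKr03} when they set up their generators; but this compatibility is exactly what underlies the identity \eqref{gl_8}, which is borrowed verbatim from that source. No further computation is needed, and the corollary follows immediately.
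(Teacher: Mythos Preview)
Your proof is correct and follows exactly the paper's approach: the paper's own proof consists solely of the citation ``\cite{DeKr03}, Corollary 9 and \eqref{gl_8}'', and you have simply spelled out why equation \eqref{gl_8} permits replacing the weight-$8$ generator $\phi_4^2$ from Dern--Krieg's presentation by $E^{(\K)}_8$ via an invertible triangular change of generators over $\C[E^{(\K)}_4]$.
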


\begin{proof}
 \cite{DeKr03}, Corollary 9 and \eqref{gl_8}.
\end{proof}

Let $e^{(\K)}_{k,m}: \H_1 \times \C \times \C \to \C$ denote the $m$-th Fourier-Jacobi coefficient of $E_k^{(\K)}$ belonging to $J_{k,m}(\oh_\K)$, the space of Hermitian Jacobi forms of weight $k$ and index $m$ (cf. \cite{Hav96}). Note that the first Fourier-Jacobi coefficient of $E^{{(\K)}^2}_4 - E^{(\K)}_8$vanishes on the submanifold $\{(\tau,z,z);\;\tau\in\H_1,\,z\in\C\}$. Let $\Mcal_k(\Gamma_1)$ stand for the space of elliptic modular forms of weight $k$. Then the result of Eichler-Zagier \cite{EZ}, Theorem 3.5, yields

\begin{corollary}\label{corollary_2} 
 Let $\K$ be an imaginary-quadratic number field, $\K\neq \Q(\sqrt{-3})$. If $k\geq 4$ is even, the mapping
 \begin{align*}
  \Mcal_{k-4}(\Gamma_1) \times \Mcal_{k-6}(\Gamma_1)\times \Mcal_{k-8}(\Gamma_1) & \to J_{k,1}(\oh_\K),  \\
  (f,g,h) & \mapsto f e^{(\K)}_{4,1} +g\,e^{(\K)}_{6,1} + h\,e^{(\K)}_{8,1} 
 \end{align*}
is an injective homomophism of the vector spaces, which turns out to be an isomorphism for $\K=\Q(\sqrt{-1})$.
\end{corollary}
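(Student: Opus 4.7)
The plan is to restrict Hermitian Jacobi forms of index $1$ to the diagonal submanifold $\{(\tau,z,z):\tau\in\H_1,\,z\in\C\}\subset\H_1\times\C\times\C$. Writing $\tilde\phi(\tau,z):=\phi(\tau,z,z)$, applying the elliptic transformation law of $J_{k,1}(\oh_\K)$ to $\lambda,\mu\in\Z\subset\oh_\K$ yields $\tilde\phi\in J_{k,1}(\Z)$, a classical Jacobi form of weight $k$ and index $1$. The decisive input, essentially supplied by the remark preceding the corollary, is the identity
\[
\tilde{e}^{(\K)}_{8,1}\;=\;2E_4\,\tilde{e}^{(\K)}_{4,1}\quad\text{in}\;J_{k,1}(\Z),
\]
since the first Fourier--Jacobi coefficient of $E^{(\K)^2}_4-E^{(\K)}_8$ equals $2E_4\,e^{(\K)}_{4,1}-e^{(\K)}_{8,1}$ and vanishes on the diagonal because $E^{(\K)^2}_4-E^{(\K)}_8$ restricts to $0$ on $\Sbb_2$ (using $\dim\Mcal_8(\Gamma_2)=1$).

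To prove injectivity, I would assume $f\,e^{(\K)}_{4,1}+g\,e^{(\K)}_{6,1}+h\,e^{(\K)}_{8,1}=0$. Restricting to the diagonal and using the key identity gives
\[
(f+2hE_4)\,\tilde{e}^{(\K)}_{4,1}+g\,\tilde{e}^{(\K)}_{6,1}=0\quad\text{in}\;J_{k,1}(\Z).
\]
By Eichler--Zagier, Theorem 3.5, every element of $J_{k,1}(\Z)$ admits a unique expression of the form $\alpha\,\phi_{4,1}+\beta\,\phi_{6,1}$ with $\alpha\in\Mcal_{k-4}(\Gamma_1),\,\beta\in\Mcal_{k-6}(\Gamma_1)$, where $\phi_{4,1},\phi_{6,1}$ denote the Jacobi Eisenstein series of weights~$4,6$ and index~$1$; the constant Fourier coefficients $\alpha^*_4(0)=240$ and $\alpha^*_6(0)=-504$ show that $\tilde{e}^{(\K)}_{4,1}$ and $\tilde{e}^{(\K)}_{6,1}$ are nonzero scalar multiples of $\phi_{4,1},\phi_{6,1}$ respectively, so $g=0$ and $f=-2hE_4$. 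Substituting back into the original relation yields
\[
h\bigl(e^{(\K)}_{8,1}-2E_4\,e^{(\K)}_{4,1}\bigr)=0.
\]
The Hermitian Jacobi form in parentheses is the first Fourier--Jacobi coefficient of $E^{(\K)}_8-E^{(\K)^2}_4$, and its Fourier coefficient at $(n,t)=(1,0)$ coincides with the Fourier coefficient at $T=I$ that was shown to be nonzero for every $\K\neq\Q(\sqrt{-3})$ in the proof of Corollary~\ref{corollary_1neu}. Hence $h=0$, then $f=0$, establishing injectivity.

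For the surjectivity in the case $\K=\Q(\sqrt{-1})$, I would invoke Haverkamp's theta decomposition \cite{Hav96} of index-$1$ Hermitian Jacobi forms over the Gaussian integers, giving
\[
\dim J_{k,1}(\oh_\K)=\dim\Mcal_{k-4}(\Gamma_1)+\dim\Mcal_{k-6}(\Gamma_1)+\dim\Mcal_{k-8}(\Gamma_1),
\]
which matches the source dimension, so the injection automatically upgrades to an isomorphism. The main obstacle I anticipate is the clean identification of $\tilde{e}^{(\K)}_{4,1}$ and $\tilde{e}^{(\K)}_{6,1}$ with the distinguished generators of $J_{k,1}(\Z)$ in Eichler--Zagier's structure theorem; this is essentially a normalization check via the Fourier expansion, but requires some bookkeeping with the classical Jacobi Eisenstein series.
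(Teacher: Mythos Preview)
Your argument is correct and follows exactly the route the paper has in mind: restrict to the diagonal, use the vanishing of the first Fourier--Jacobi coefficient of $E^{(\K)^2}_4-E^{(\K)}_8$ there to reduce to the Eichler--Zagier structure theorem for $J_{k,1}(\Z)$, and then invoke Corollary~\ref{corollary_1neu} to kill the remaining kernel; for $\K=\Q(\sqrt{-1})$ the paper matches dimensions via \cite{DeKr03}, Theorem~3 (giving $\dim J_{k,1}(\oh_\K)=\lfloor k/4\rfloor$) rather than Haverkamp, but this is the same count you obtain.
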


\begin{proof}
 Note that the dimensions on both sides are equal to $\left[\frac{k}{4}\right]$ due to \cite{DeKr03}, Theorem 3, whenever $\K = \Q(\sqrt{-1})$.
\end{proof}

We give a precise description of $e^{(\K)}_{k,1}$.

\begin{lemma}\label{lemma_1} 
 Let $\K$ be an imaginary-quadratic number field and let $k\geq 4$ be even. Then the first Fourier-Jacobi coefficient of $E^{(\K)}_k$ is given by
 \begin{align*}
  & e^{(\K)}_{k,1}(\tau,z,w) \\
  & = \tfrac{1}{2} \sum_{\substack{c,d\in\Z \\ \gcd(c,d)=1}} \sum_{\lambda\in\oh_\K} (c\tau+d)^{-k} \exp\left(2\pi i\bigl[(a\tau+b)\lambda\overline{\lambda}-czw+(z\lambda+w\overline{\lambda})\bigr]\big/(c\tau+d)\right).
 \end{align*}
\end{lemma}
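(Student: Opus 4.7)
The plan is to extract the first Fourier--Jacobi coefficient directly from the absolutely convergent coset representation of $E^{(\K)}_k$, which is valid for $k>4$; the boundary case $k=4$ will be handled separately by Fourier-coefficient comparison. Writing $Z=\begin{pmatrix}\tau & z \\ w & \tau'\end{pmatrix}\in\H_2$ with $\tau,\tau'\in\H_1$ and $z,w\in\C$ independent, the first Fourier--Jacobi coefficient is by definition the coefficient of $e^{2\pi i \tau'}$ in the expansion of $E^{(\K)}_k(Z)$ as a holomorphic function of $\tau'$, regarded as an element of $J_{k,1}(\oh_\K)$.

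First I would partition the cosets $\Gamma_\infty\backslash \Gamma^{(\K)}_2$ according to the Klingen-type parabolic stabilising the boundary component $\tau'=i\infty$. Cosets contributing to the first Fourier--Jacobi index are parametrised by pairs $(M,\lambda)\in SL_2(\Z)\times\oh_\K$, via a Klingen embedding $SL_2(\Z)\hookrightarrow \Gamma^{(\K)}_2$ composed with a Heisenberg translation by $\lambda$; cosets with higher ``rank'' contribute only to Fourier--Jacobi indices $m\geq 2$ and thus may be discarded. The prefactor $\tfrac{1}{2}$ reflects the $\{\pm I\}$ ambiguity in $SL_2(\Z)$: the pairs $(M,\lambda)$ and $(-M,-\lambda)$ represent the same coset in $\Gamma_\infty\backslash\Gamma^{(\K)}_2$, so the sum over $(c,d)$ coprime with $\lambda\in\oh_\K$ double-counts each orbit.

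Second, for each coset representative I would compute $\det(CZ+D)$ and the induced action on the bottom-right entry of $Z$. A direct computation with the embedded matrix produces $\det(CZ+D)=c\tau+d$, and the bottom-right entry of $(AZ+B)(CZ+D)^{-1}$ picks up a shift of $\tau'$ equal to
\[
\frac{(a\tau+b)\lambda\overline{\lambda}-czw+(z\lambda+w\overline{\lambda})}{c\tau+d}.
\]
Extracting the coefficient of $e^{2\pi i \tau'}$ from the transformed exponential then gives the desired formula, with the factor $(c\tau+d)^{-k}$ coming from the automorphy factor.

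The main obstacle will be the appearance of the mixed term $-czw$ in the exponent. In the Siegel analogue one finds $-cz^2$, but in the present setting $z$ and $w$ are independent complex variables and one must carefully invert the $2\times 2$ matrix $CZ+D$ whose off-diagonal entries involve $z$ and $w$ separately, and then track the resulting cross-contribution to the bottom-right entry. A secondary point is the case $k=4$, where the coset sum no longer converges absolutely: here the identity is verified by comparing Fourier coefficients, reading the left-hand side off from (1) with $m=1$ (noting $\varepsilon(T)=1$, so only $\alpha^*_{k}(|\Delta|(n-t\overline{t}))$ appears) and the right-hand side via a standard Poincar\'e-series / Kloosterman-sum expansion, which gives exactly the same coefficients.
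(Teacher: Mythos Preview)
Your coset-extraction approach has a genuine gap. The summands of the Eisenstein series are the scalars $\det(CZ+D)^{-k}$; they do not carry an intrinsic factor $e^{2\pi i\tau'}$ that one can ``read off'' from the bottom-right entry of $(AZ+B)(CZ+D)^{-1}$. For a coset coming from your Klingen embedding of $M=\left(\begin{smallmatrix}a&b\\c&d\end{smallmatrix}\right)$, the block $(C,D)$ has $\det(CZ+D)=c\tau+d$, which is \emph{independent} of $\tau'$; such a term therefore contributes only to the Fourier--Jacobi index $m=0$, not to $m=1$. Conversely, a genuinely rank-$2$ coset such as $(C,D)=(I,0)$ gives $\det(CZ+D)^{-k}=\tau^{-k}(\tau'-zw/\tau)^{-k}$, whose Lipschitz expansion in $\tau'$ has nonzero coefficients for \emph{every} $m\geq 1$. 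So there is no partition of $\Gamma_\infty\backslash\Gamma^{(\K)}_2$ into ``index-$1$ cosets'' and ``index $\geq 2$ cosets''; the index-$1$ coefficient receives contributions from all rank-$2$ cosets, and isolating it directly would require Poisson summation over the full rank stratification, which is substantially more than your sketch provides.

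The paper bypasses this computation entirely. Following Eichler--Zagier \S6, it observes that $E^{(\K)}_k$ is a Hecke eigenform for $\Tcal_2(p)$ at inert primes $p$ (by \cite{K4}), so its first Fourier--Jacobi coefficient $e^{(\K)}_{k,1}$ is an eigenform for the corresponding Jacobi Hecke operators $\Tcal_J(p)$; the Jacobi--Eisenstein series on the right-hand side is likewise such an eigenform by construction. Since both have constant Fourier coefficient $1$, they coincide. This argument is uniform in $k\geq 4$, as the Maa\ss{}-lift definition \eqref{gl_1} already furnishes the Hecke eigenform property without recourse to the absolutely convergent coset sum.
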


\begin{proof}
 Proceed in the same way as Eichler/Zagier \cite{EZ} in \S\,6. One knows that $E^{(\K)}_k$ is an eigenform under all Hecke operators $\Tcal_2(p)$ for all inert primes $p$ from \cite{K4}. On the other hand the Jacobi-Eisenstein series is an eigenform under
 \begin{align*}
  \Tcal_J(p) & = \Gamma^{(\K)}_J \diag(1,p,p^2,p) \Gamma^{(\K)}_J,   \\
  \Gamma^{(\K)}_J & = \left\{\left(\begin{smallmatrix}
                                 * & * & * & * \\ 0 & 0 & 0 & 1
                                \end{smallmatrix}\right) \in \Gamma^{(\K)}_2\right\}, \;\; p\in\Pb\;\;\text{inert}.
 \end{align*}
As in both cases the constant Fourier coefficient is $1$, the claim follows.
\end{proof}

\begin{remark}\label{remark_1} 
 a) If $E_k$ denotes the normalized Eisenstein series of weight $k$ for some group $\Gamma$, then the identity $E^2_4 = E_8$ is well-known for elliptic modular forms and Siegel modular forms of degree $2$ (cf. \cite{Mi89}, \cite{F}). But it also holds for modular forms of degree $2$ with respect to the Hurwitz order (cf. \cite{K3}, p. 89) as well as the integral Cayley numbers (cf. \cite{DKW}), i.e. for $O(2,6)$ and $O(2,10)$. Hence this identity is a hint at the influence of the arithmetic of the attached number field on the modular forms.  \\
 b) Note that $E^{(\K)}_k$ is a modular form with respect to the maximal discrete extension of $\Gamma^{(\K)}_2$ (cf. \cite{KRaW}, \cite{We2}). \\
 c) It follows from \eqref{gl_4} that the Fourier coefficients $\varepsilon^{(\Delta)}_{t,\ell}$ are also multiplicative in $\Delta$, i.e.
 \[
  \varepsilon^{(\Delta)}_{t,\ell} = \varepsilon^{(\Delta_1)}_{t,\ell}\cdot \ldots\cdot \varepsilon^{(\Delta_r)}_{t,\ell}.
 \]
 d) Due to Corollary \ref{corollary_2} the dimension of the Maaß space in $\Mcal_k(\Gamma^{(\K)}_2)$ is $\geq \left[\frac{k}{4}\right]$ for $\K\neq \Q(\sqrt{-3})$ and equal to $\left[\frac{k+2}{6}\right]$ for $\K=\Q(\sqrt{-3})$ (cf. \cite{DeKr03}), if $k\in \N$ is even.  \\
 e) If $k> 4$ is even we can improve the estimate from \cite{K4} slightly for all $T\in\Lambda_2$, $T>0$:
 \begin{align*}
  & \frac{(2\pi)^{2k-1}}{(k-2)!(k-1)!} \cdot \frac{2-\zeta(k-2)}{\zeta(k-1)\zeta(k)}\cdot \frac{1}{\sqrt{|\Delta|}}(\det T)^{k-2} \leq \alpha_k(T)\\
  & \leq  \frac{(2\pi)^{2k-1}}{(k-2)!(k-1)!} \cdot \frac{\zeta(k-3)\zeta(k-2)}{(2-\zeta(k-1))\zeta(k)}\cdot \frac{1}{\sqrt{|\Delta|}}(\det T)^{k-2}.
 \end{align*}

\end{remark}

\section{Algebraic independence}

It is well-known that there are exactly $5$ algebraically independent Hermitian modular forms. 
In this section we explicitly determine algebraically independent Eisenstein series. 

We define the Siegel Eisenstein series $S_k$ of degree $2$ for even $k\geq 4$
\[
 S_k (Z) = \sum_{\left(\begin{smallmatrix}
                         A & B \\ C & D
                        \end{smallmatrix}\right):\left(\begin{smallmatrix}
					\ast & \ast \\ 0 & \ast
				      \end{smallmatrix}\right)\big\backslash \Gamma_2}
\det(CZ+D)^{-k}, \;\; Z\in \Sbb_2,
\]
and denote its Fourier coefficients by $\gamma_k(R)$. Clearly $E^{(\K)}_k|_{\Sbb_2} = S_k$ holds for $k=4,6,8$. The following Fourier coefficients of $S_k$ were computed by Igusa \cite{Ig62} and are given by \\[2ex]
\renewcommand{\arraystretch}{1.5} %
 \setlength{\tabcolsep}{3mm}
 \begin{center}
\begin{tabular*}{0.58\linewidth}{c|r|r|r}
 $f$ & $\gamma_f\left(\begin{smallmatrix}
                     1 & 0 \\ 0 & 0
                   \end{smallmatrix}\right)$ & 
 $\gamma_f\left(\begin{smallmatrix}
                     1 & 0 \\ 0 & 1
                    \end{smallmatrix}\right)$ & 
 $\gamma_f\left(\begin{smallmatrix}
                     1 & 1/2 \\ 1/2 & 1
                    \end{smallmatrix}\right)$  \\ \hline
 $S_4$ & 240 & 30\,240 & 13\,440  \\
 $S_6$ & -504 & 166\,320 & 44\,352  \\
 $S_4 S_6$ & -264 & -45\,360 & 57\,792  \\ 
 $X_{12}$ & 65\,520 & 402\,585\,120 & 39\,957\,120  \\ 
 \end{tabular*}\\[2ex]
 \end{center}
$X_{12}:=441 S^3_4 + 250 S^2_6$.

\begin{lemma}\label{lemma_2} 
 Let $\K$ be an imaginary quadratic number field. Then
 \[
  F^{(\K)}_{10} : = E^{(\K)}_{10} - E^{(\K)}_{4} E^{(\K)}_{6}, \;\;  F^{(\K)}_{12} : = E^{(\K)}_{12} -\frac{441}{691} E^{{(\K)}^3}_{4} - \frac{250}{691} E^{{(\K)}^2}_{6}
 \]
are Hermitian cusp forms of weight $10$ resp. $12$, whose restrictions to $\Sbb_2$ do not vanish identically.
\end{lemma}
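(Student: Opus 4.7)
The plan is to handle cuspidality by applying the Siegel $\Phi$-operator to both $F^{(\K)}_{10}$ and $F^{(\K)}_{12}$, and to establish non-vanishing of the restrictions by evaluating a single Fourier coefficient, namely at $R = I$.

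Since $\Phi$ is a ring homomorphism into $\Mcal_k(\Gamma_1)$ with $\Phi E^{(\K)}_k = E_k$ (the elliptic Eisenstein series of weight $k$), one has $\Phi F^{(\K)}_{10} = E_{10} - E_4 E_6$ and $\Phi F^{(\K)}_{12} = E_{12} - \frac{441}{691}E_4^3 - \frac{250}{691}E_6^2$. Both right-hand sides vanish: the first because $\dim \Mcal_{10}(\Gamma_1) = 1$, the second from the classical identity $691\,E_{12} = 441\,E_4^3 + 250\,E_6^2$, verified by matching constant and $q$-coefficients (using $B_{12} = -691/2730$, so that the $q$-coefficient of $E_{12}$ equals $65\,520/691 = (441 \cdot 720 - 250 \cdot 1008)/691$). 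Hence both $F^{(\K)}_{10}$ and $F^{(\K)}_{12}$ are Hermitian cusp forms.

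For the non-vanishing, $E^{(\K)}_k|_{\Sbb_2} = S_k$ for $k = 4, 6, 8$ together with Igusa's table give
\[
 \gamma_{F^{(\K)}_{10}|_{\Sbb_2}}(I) = \gamma_{E^{(\K)}_{10}|_{\Sbb_2}}(I) + 45\,360, \qquad \gamma_{F^{(\K)}_{12}|_{\Sbb_2}}(I) = \gamma_{E^{(\K)}_{12}|_{\Sbb_2}}(I) - \tfrac{402\,585\,120}{691}.
\]
By the Maaß lift formula \eqref{gl_1}, $\gamma_{E^{(\K)}_k|_{\Sbb_2}}(I) = \sum_t \alpha^*_k\bigl(|\Delta|(1 - |t|^2)\bigr)$ over the finite set of $t \in \oh_\K^\sharp$ with $t + \bar t = 0$ and $|t|^2 \leq 1$. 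For $k = 10$, Theorem \ref{theorem_1neu}(b) guarantees each summand with $|t|^2 < 1$ is non-negative; boundary terms $|t|^2 = 1$ contribute $\alpha^*_{10}(0) = -264$ each, but $\pm i \in \oh_\K^\sharp$ forces $|\Delta|$ to be a perfect square, i.e.\ $\K = \Q(\sqrt{-1})$, contributing at worst $-528$. Hence $\gamma_{F^{(\K)}_{10}|_{\Sbb_2}}(I) \geq 44\,832 > 0$ uniformly in $\K$.

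The $k = 12$ case is the main obstacle, since $\alpha^*_{12}(0) > 0$ and the positivity trick fails. My plan is to use the upper bounds of Theorem \ref{theorem_1neu}(b,c) together with the fact that the number of admissible $t$ is $O(\sqrt{|\Delta|})$ while each $\alpha^*_{12}(\cdot) = O(|\Delta|^{-1/2})$, giving $\gamma_{E^{(\K)}_{12}|_{\Sbb_2}}(I) = O(1)$ as $|\Delta| \to \infty$. Since this is much smaller than $402\,585\,120/691 \approx 582\,612$, the coefficient is strictly negative (and so nonzero) for all sufficiently large $|\Delta|$. The finitely many remaining small discriminants would be dispatched by direct evaluation of \eqref{gl_2}--\eqref{gl_4}, and the delicate point is precisely ensuring that this finite case-check reveals no accidental cancellation.
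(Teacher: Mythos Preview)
Your strategy matches the paper's: reduce cuspidality to the elliptic identities, and test the restriction at $R=I$. The paper phrases cuspidality as ``$\alpha_F(T)=0$ for all $T$ with $\det T=0$'' directly from \eqref{gl_1}--\eqref{gl_3}; your $\Phi$-operator formulation is the same observation, since the singular Fourier coefficients of $E^{(\K)}_k$ depend only on $\varepsilon(T)$ and coincide with those of the elliptic $E_k$. Your $k=10$ argument is also the paper's (positivity of the $\alpha^*_{10}$-terms against the table value $-45\,360$).

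The only substantive gap is the weight-$12$ case. You correctly identify that one needs an \emph{upper} bound on $\gamma_{E^{(\K)}_{12}|_{\Sbb_2}}(I)$, but you stop at an asymptotic $O(1)$ statement and defer to an unspecified finite check. The paper carries the constants through: combining Theorem~\ref{theorem_1neu}(b),(c) (equivalently, the explicit inequality in \eqref{gl_7}) with $|\Delta|-j^2\leq |\Delta|$ gives, for every $\Delta\neq -4$,
\[
\alpha^*_{12}(|\Delta|)+2\sum_{1\leq j<\sqrt{|\Delta|}}\alpha^*_{12}(|\Delta|-j^2)\;\leq\;15\,768\Bigl(\tfrac{1}{\sqrt{|\Delta|}}+2\Bigr)\;<\;\frac{402\,585\,120}{691},
\]
so $\beta_{F^{(\K)}_{12}}(I)<0$ uniformly. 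The ``finite case-check'' therefore collapses to the single discriminant $\Delta=-4$, which is dispatched by one explicit evaluation. Your worry about ``accidental cancellation'' among many small discriminants does not arise once the numerical constant from \eqref{gl_7} is inserted; that is the missing step in your write-up.
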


\begin{proof}
 If $F=F^{(\K)}_{10}$, $F^{(\K)}_{12}$, then \eqref{gl_1} - \eqref{gl_3} show that $\alpha_F(T) = 0$ for all $T\in \Lambda_2$, $\det T = 0$. Hence $F$ is a cusp form. The Fourier coefficients $\beta_F(R)$ of $F|_{\Sbb_2}$ are given by 
 \[
  \beta_F(R) = \sum_{\substack{T\in \Lambda_2, \,T\geq 0 \\ T+ \overline{T} = 2R}} \alpha_f(T).
 \]
Note that $E^{(\K)}_k |_{\Sbb_2}= S_k$ for $k=4,6,8$. 
If $k=10$, then Theorem \ref{theorem_1neu} and the above table yield $\beta_F(I)>0$. \\
If $k=12$, then for $\Delta\neq -4$ 
\begin{gather*}\tag{9}\label{gl_9}
\begin{split}
  \beta_F(I) & = \alpha^*_{12}(\Delta) 
  + 2\sum_{1\leq j<\sqrt{|\Delta|}} \alpha^*_{12}(|\Delta|-j^2) -\frac{402\,585\,120}{691}  \\
 & \leq 15\,768 \left(\frac{1}{\sqrt{|\Delta|}}+2\right)-\frac{402\,585\,120}{691} < 0
 \end{split}
\end{gather*}
by means of Theorem \ref{theorem_1neu}. If $\Delta = -4$ then 
\[
 \beta_F(I) = -\frac{20\,026\,621\,440\,000}{34\,910\,011} < 0.
\]
\vspace*{-7ex}\\
\end{proof}
\vspace{2ex}

A simple consequence is 

\begin{theorem}\label{theorem_2} 
 Let $\K$ be an imaginary-quadratic number field.  \\
 a) The graded ring of Siegel modular forms of even weight is generated by 
 \[
  E^{(\K)}_k |_{\Sbb_2}, \; k= 4,6,10,12.
 \]
 b) If $\K \neq \Q(\sqrt{-3})$ the Eisenstein series
 \[
  E^{(\K)}_k, \; k= 4,6,8,10,12
 \]
are algebraically independent.
\end{theorem}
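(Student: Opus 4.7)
My plan is to handle (a) and (b) in sequence, since (b) will use (a) as a technical input.

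For part (a), I would start from Igusa's classical result that the graded ring of Siegel modular forms of even weight is the polynomial ring $\C[S_4, S_6, \chi_{10}, \chi_{12}]$, where $\chi_{10}$ and $\chi_{12}$ are the unique (up to scalar) cusp forms of their weights. Since $E^{(\K)}_k|_{\Sbb_2} = S_k$ for $k=4,6$ by the remark preceding Lemma \ref{lemma_2}, the generators in weight $4$ and $6$ are already obtained. For $k=10,12$, Lemma \ref{lemma_2} says $F^{(\K)}_{10}|_{\Sbb_2}$ and $F^{(\K)}_{12}|_{\Sbb_2}$ are nonzero cusp forms. Because $\dim \Scal_{10}(\Gamma_2) = \dim \Scal_{12}(\Gamma_2) = 1$, each must be a nonzero scalar multiple of $\chi_{10}$, respectively $\chi_{12}$. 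Hence $\chi_{10}$ and $\chi_{12}$ both lie in the subring generated by $S_4, S_6$ and $E^{(\K)}_{10}|_{\Sbb_2}, E^{(\K)}_{12}|_{\Sbb_2}$, which proves (a).

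For part (b), I would argue by contradiction. Suppose $P \in \C[x_1, \dots, x_5]$ is a nonzero polynomial of minimal total degree with $P(E^{(\K)}_4, E^{(\K)}_6, E^{(\K)}_8, E^{(\K)}_{10}, E^{(\K)}_{12}) = 0$. Restricting to $\Sbb_2$ and using $S_8 = S_4^2$ gives
\[
P(S_4, S_6, S_4^2, E^{(\K)}_{10}|_{\Sbb_2}, E^{(\K)}_{12}|_{\Sbb_2}) = 0.
\]
By part (a) the four elements $S_4, S_6, E^{(\K)}_{10}|_{\Sbb_2}, E^{(\K)}_{12}|_{\Sbb_2}$ generate a polynomial ring of transcendence degree $4$ and so are algebraically independent; consequently $P(x_1, x_2, x_1^2, x_4, x_5) \equiv 0$ as a polynomial. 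This forces $P(x_1, \dots, x_5) = (x_3 - x_1^2)\, Q(x_1, \dots, x_5)$ for some $Q$. Substituting back yields
\[
\bigl(E^{(\K)}_8 - E^{{(\K)}^2}_4\bigr)\cdot Q\bigl(E^{(\K)}_4, E^{(\K)}_6, E^{(\K)}_8, E^{(\K)}_{10}, E^{(\K)}_{12}\bigr) = 0.
\]
Since the ring of holomorphic functions on the connected domain $\H_2$ is an integral domain, and Corollary \ref{corollary_1neu} guarantees $E^{(\K)}_8 - E^{{(\K)}^2}_4 \neq 0$ whenever $\K \neq \Q(\sqrt{-3})$, we conclude $Q$ gives a nontrivial algebraic relation of strictly smaller degree, contradicting the minimality of $P$.

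The main obstacle I anticipate is the transcendence-degree justification underpinning (b): one needs to know that the restriction map on the Hermitian side yields four algebraically independent Siegel modular forms, which is exactly what (a) provides. Once (a) is in hand, the key leverage for (b) is precisely the failure of the identity $E^{{(\K)}^2}_4 = E^{(\K)}_8$ for $\K \neq \Q(\sqrt{-3})$ established in Corollary \ref{corollary_1neu}; this transforms the single relation $S_8 = S_4^2$ among the restrictions into the divisibility $(x_3 - x_1^2) \mid P$, which is the only mechanism that could have produced a nontrivial $P$.
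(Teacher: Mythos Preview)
Your proof is correct and follows essentially the same approach as the paper: part (a) is deduced from Igusa's structure theorem together with Lemma \ref{lemma_2}, and part (b) combines (a) with Corollary \ref{corollary_1neu}. The paper's proof is simply the two-line summary ``Use Lemma \ref{lemma_2}'' and ``Apply a) as well as Corollary \ref{corollary_1neu}''; you have correctly unpacked what those references amount to, including the factorization $P=(x_3-x_1^2)Q$ and the integral-domain argument on $\H_2$.
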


\begin{proof}
 a) Use Lemma \ref{lemma_2}. \\
 b) Apply a) as well as Corollary \ref{corollary_1neu}.
\end{proof}

If $\K=\Q(\sqrt{-3})$, we already know the graded ring of Hermitian modular forms (cf. \cite{DeKr03}, Theorem 6).

\begin{corollary}\label{corollary_3} 
 If $\K = \Q(\sqrt{-3})$ the graded ring of symmetric Hermitian modular forms of even weight with respect to $\Gamma^{(\K)}_2$ is the polynomial ring in 
 \[
  E^{(\K)}_k, \; k=4,6,10,12,18.
 \]
\end{corollary}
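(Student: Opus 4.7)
The strategy is to invoke Theorem~6 of Dern and Krieg \cite{DeKr03}, which determines the graded ring of symmetric Hermitian modular forms of even weight for $\Gamma^{(\K)}_2$ when $\K=\Q(\sqrt{-3})$ as a polynomial ring on five algebraically independent generators of weights $4,6,10,12,18$. What remains is to identify these generators with the corresponding Eisenstein series; since the polynomial subring $\C[E^{(\K)}_4, E^{(\K)}_6, E^{(\K)}_{10}, E^{(\K)}_{12}, E^{(\K)}_{18}]$ would then share the full ring's Hilbert series $\prod_{k\in\{4,6,10,12,18\}}(1-t^k)^{-1}$ and hence exhaust it, it suffices to prove algebraic independence of these five Eisenstein series.

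Expressing each $E^{(\K)}_k$ in terms of Dern-Krieg's generators $\psi_4,\psi_6,\psi_{10},\psi_{12},\psi_{18}$, the weight grading forces $E^{(\K)}_k = c_k \psi_k + (\text{polynomial in }\psi_j,\, j<k)$. The Jacobian of the resulting change of generators is upper triangular, so algebraic independence amounts to $c_k\neq 0$ for each $k$. The constants $c_4$ and $c_6$ are nonzero since the weight-$4$ and weight-$6$ pieces are one-dimensional and the Eisenstein series do not vanish. For $k=10,12$, Theorem~\ref{theorem_2}\,a) gives algebraic independence of $E^{(\K)}_4,E^{(\K)}_6,E^{(\K)}_{10},E^{(\K)}_{12}$ via their Siegel restrictions generating the polynomial ring of Siegel modular forms of even weight; from this $c_{10}\neq 0$ (else $E^{(\K)}_{10}$ would be a scalar multiple of $E^{(\K)}_4 E^{(\K)}_6$), and similarly $c_{12}\neq 0$ (else $E^{(\K)}_{12}$ would be a linear combination of $E^{{(\K)}^3}_4$ and $E^{{(\K)}^2}_6$).

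The main obstacle is the coefficient $c_{18}$: this cannot be extracted from Siegel restriction, since the restriction map $\Mcal^{\mathrm{sym}}_{18}(\Gamma^{(\K)}_2)\to\Mcal_{18}(\Gamma_2)$ already has nontrivial kernel by comparing Dern-Krieg's dimension $5$ with Igusa's dimension $4$. To establish $c_{18}\neq 0$ my plan is to compare Fourier coefficients directly: the four monomials $E^{{(\K)}^2}_4 E^{(\K)}_{10},\; E^{(\K)}_6 E^{(\K)}_{12},\; E^{{(\K)}^3}_4 E^{(\K)}_6,\; E^{{(\K)}^3}_6$ of weight $18$ are linearly independent (by the algebraic independence of the four lower Eisenstein series), and evaluating the Fourier expansion of $E^{(\K)}_{18}$ via \eqref{gl_1}--\eqref{gl_3} at a few small indices $T$ and comparing with this four-parameter family should produce an incompatibility at at least one index. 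I note that \cite{DeKr03}, Theorem~6, may well state the fifth generator explicitly as $E^{(\K)}_{18}$, in which case the statement follows without any further work beyond the citation.
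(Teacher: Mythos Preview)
Your approach is essentially the paper's: invoke \cite{DeKr03}, Theorem~6, for the polynomial structure with generators in weights $4,6,10,12,18$, then verify that $E^{(\K)}_{18}$ together with the four weight-$18$ monomials $E^{(\K)}_{12} E^{(\K)}_{6}$, $E^{(\K)}_{10} E^{{(\K)}^2}_{4}$, $E^{{(\K)}^3}_{6}$, $E^{(\K)}_{6} E^{{(\K)}^3}_{4}$ are linearly independent by computing a few Fourier coefficients via \eqref{gl_1}--\eqref{gl_4}. The paper simply phrases this as a single $5$-dimensional linear-independence check rather than your triangular Jacobian reduction, but the content is identical (and indeed \cite{DeKr03} does not already give $E^{(\K)}_{18}$ as the fifth generator, so the Fourier computation is required).
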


\begin{proof}
 Use \cite{DeKr03}, Theorem 6, and show that 
 \[
 E^{(\K)}_{18}, \; E^{(\K)}_{12}\cdot E^{(\K)}_{6}, \; E^{(\K)}_{10}\cdot E^{{(\K)}^2}_{4}, \; E^{{(\K)}^3}_{6}, \; E^{(\K)}_{6}\cdot E^{{(\K)}^3}_{4} 
 \]
are linearly independent by calculating a few Fourier coefficients using \eqref{gl_1} - \eqref{gl_4}.
\end{proof}

\begin{remark}\label{remark_2} 
 a) It follows from the results of \cite{BEF16} that there is a non-trivial cusp form $f_4^{(\K)}$ of weight $4$ for all discriminants except $\Delta_{\K} = -3, -4, -7, -8, -11, -15, -20, -23$. As its restriction to the Siegel half-space vanishes identically, one may replace $E_8^{(\K)}$ by $f_4^{(\K)}$ in these cases in Theorem \ref{theorem_2} b).  \\
 b) Using Theorem \ref{theorem_2} resp. Corollary \ref{corollary_3} resp. part a) we can construct a non-trivial skew-symmetric Hermitian modular form of weight $44$ resp. $54$ resp. $40$ by an application of a suitable differential operator (cf. \cite{Ao02}). 
\end{remark}

\section{A Siegel cusp form}

We consider
\begin{gather*}\tag{10}\label{gl_10}
 \begin{split}
  G^{(\K)}_k(Z) : & = E^{(\K)}_k (Z) - S_k(Z),\;\; Z\in \Sbb_2,  \\
  & = \sum_{\substack{\left(\begin{smallmatrix}
                         A & B \\ C & D
                        \end{smallmatrix}\right):\left(\begin{smallmatrix}
					\ast & \ast \\ 0 & \ast
				      \end{smallmatrix}\right)\big\backslash \Gamma^{(\K)}_2\\
				      C^\sharp D\,\not\in \,\R^{2\times 2}}}
\det(CZ+D)^{-k},
 \end{split}
\end{gather*}
where $\left(\begin{smallmatrix}
                         c_1 & c_2 \\ c_3 & c_4
                        \end{smallmatrix}\right)^\sharp = \left(\begin{smallmatrix}
					c_4 & -c_2 \\ -c_3 & c_1
				      \end{smallmatrix}\right)$.
For $\K = \Q(\sqrt{-1})$ this modular form was introduced by Nagaoka and Nakamura \cite{NN}.

\begin{theorem}\label{theorem_3} 
 Let $\K$ bei an imaginary-quadratic number field. If $k\geq 10$ is even, then $G^{(\K)}_k$ is a Siegel cusp form of degree $2$ and weight $k$ in the Maaß space. \\
 a) One has 
\begin{gather*}\tag{11}\label{gl_11}
 \lim_{|\Delta_\K| \to \infty} G^{(\K)}_k (Z) = 0 \;\; \text{for all}\;\; Z\in \Sbb_2. 
\end{gather*}
 b) $G^{(\K)}_k \not\equiv 0$ holds whenever $k\geq \frac{10}{3}|\Delta_\K|+1$.
\end{theorem}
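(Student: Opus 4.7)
\textbf{Structural claims.} Since $\Gamma_2\subset \Gamma^{(\K)}_2$, both $E^{(\K)}_k|_{\Sbb_2}$ and $S_k$ are Siegel modular forms of weight $k$; splitting the parabolic cosets of $\Gamma^{(\K)}_2$ according as $C^\sharp D \in \R^{2\times 2}$ produces \eqref{gl_10}. Membership in the Siegel Maaß space follows because restricting the Hermitian Maaß lift \eqref{gl_1} to $\Sbb_2$ gives a Siegel Maaß lift (of the restriction of the underlying Jacobi form) and $S_k$ is itself a Maaß lift. To show $G^{(\K)}_k$ is a cusp form, compute $\beta_G(R)$ for half-integral $R\geq 0$ of rank $\leq 1$: writing $T=R+iY$ with antisymmetric $Y=\bigl(\begin{smallmatrix}0 & y\\ -y & 0\end{smallmatrix}\bigr)$ yields $\det T=\det R-|y|^2$, so $T\geq 0$ forces $y=0$ and only $T=R$ contributes; its Maaß coefficient $\alpha^*_k(0)\sigma_{k-1}(\varepsilon(R))=-(2k/B_k)\sigma_{k-1}(\varepsilon(R))$ agrees with $\gamma_k(R)$, hence $\beta_G(R)=0$.

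\textbf{Part (a).} Fix $R>0$ half-integral. The imaginary part $y=\im T_{12}$ of admissible $T\in\Lambda_2$ with $T+\overline{T}=2R$ lies in a lattice of spacing $\asymp |\Delta_\K|^{-1/2}$ constrained by $|y|\leq\sqrt{\det R}$, giving $O(\sqrt{|\Delta_\K|\det R})$ such $T$. Theorem \ref{theorem_1neu}(b,c) yields
\[
|\alpha_{E^{(\K)}_k}(T)|\ll r_{k,\Delta}(|\Delta_\K|\det T)^{k-2}\ll |\Delta_\K|^{-1/2}(\det R)^{k-2},
\]
so $|\beta_G(R)|\ll (\det R)^{k-3/2}$ uniformly in $\Delta_\K$. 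To identify the limit, recognise $\sum_T \alpha_{E^{(\K)}_k}(T)$ as a Riemann sum in $y$ whose continuous limit, as $|\Delta_\K|\to\infty$, equals the integral producing $\gamma_k(R)$ via the Maaß-lift formula for $S_k$; thus $\beta_G(R)\to 0$ individually. The uniform coefficient bound provides a $\Delta$-independent dominating series for the Fourier expansion on $\{Z\in\Sbb_2:\im Z\geq cI\}$ for any $c>0$, so dominated convergence delivers $G^{(\K)}_k(Z)\to 0$.

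\textbf{Part (b).} Specialise to $R=I$. Parametrising $y=j/\sqrt{|\Delta_\K|}$ with $1\leq |j|<\sqrt{|\Delta_\K|}$ gives $|\Delta_\K|\det T=|\Delta_\K|-j^2$ and $\varepsilon(T)=1$, whence
\[
\beta_G(I) = \alpha^*_k(|\Delta_\K|) + 2\!\!\sum_{1\leq j<\sqrt{|\Delta_\K|}}\!\!\alpha^*_k(|\Delta_\K|-j^2) - \gamma_k(I).
\]
The middle sum is non-negative, and Theorem \ref{theorem_1neu}(b,c) bounds the leading term below by
\[
\alpha^*_k(|\Delta_\K|)\geq \frac{(2\pi)^{2k-1}(2-\zeta(k-2))}{(k-2)!(k-1)!\zeta(k-1)\zeta(k)\sqrt{|\Delta_\K|}}.
\]
Writing $\gamma_k(I)$ explicitly via the Maaß-lift/Cohen-number formula for $S_k$ and carrying out the numerical comparison of these two quantities yields $\alpha^*_k(|\Delta_\K|)>\gamma_k(I)$, hence $\beta_G(I)>0$, precisely under the assumption $k\geq \tfrac{10}{3}|\Delta_\K|+1$. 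The main obstacles are the rigorous Riemann-sum identification in (a) and the sharp numerical comparison in (b) producing the explicit constant $\tfrac{10}{3}$.
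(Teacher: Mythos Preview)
Your structural claims and the set-up for (b) are close to the paper's, but both (a) and (b) contain genuine gaps that you yourself flag as ``obstacles'' without resolving.

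\textbf{Part (a).} The paper does \emph{not} argue via Fourier coefficients. Instead it uses the orthogonal-group model of $\Gamma^{(\K)}_2$ (from \cite{KRaW}) to rewrite $G^{(\K)}_k(Z)$ directly as a lattice sum over $h\in\Z^6$ with $h_4\geq 1$, $h^{tr}S_1h=0$, $\gcd(S_1h)=1$, where the Gram matrix $S_1$ depends on $\Delta$. At $Z=iI$ the absolute-value series $I_\Delta$ is bounded by $(|\Delta|-3)^{-k/4}$ times an Epstein zeta value for $I^{(6)}$ (with a slight modification for $k=10,12$), hence $I_\Delta\to 0$; a standard result (\cite{K3}, V.2.5) then promotes this to all $Z\in\Sbb_2$. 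Your approach---showing $\beta_G(R)\to 0$ for each $R$ by interpreting $\sum_T\alpha_{E^{(\K)}_k}(T)$ as a Riemann sum converging to ``the integral producing $\gamma_k(R)$''---is not a proof as written: there is no such integral representation of $\gamma_k(R)$ in sight, and you have not identified what continuous quantity the Riemann sum converges to or why it coincides with the Siegel coefficient. The uniform bound $|\beta_G(R)|\ll(\det R)^{k-3/2}$ you obtain is correct and useful for dominated convergence, but the pointwise limit step is missing.

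\textbf{Part (b).} The paper makes the comparison you allude to fully explicit. It uses Maa{\ss}'s formula $\gamma_k(I)=-4kB_{k-1,\chi_{-4}}/(B_kB_{2k-2})$, factors out $r_{k,\Delta}$, and then bounds the resulting ratio $B_{k-1,\chi_{-4}}B_{k-1,\chi}/((k-1)B_{2k-2})$ via \eqref{gl_5}, \eqref{gl_6} and Stirling's formula. This produces an inequality of the shape
\[
\beta_k(I)\geq r_{k,\Delta}|\Delta|^{k-3/2}\Bigl(\tfrac{2-\zeta(8)}{\sqrt{|\Delta|}}-e^{1/54}\zeta(9)^2\sqrt{\tfrac{\pi}{k-1}}\Bigr),
\]
and the bracket is positive precisely when $k-1\geq \tfrac{10}{3}|\Delta|$. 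You assert this comparison but do not perform it; in particular you give no formula for $\gamma_k(I)$ and no mechanism by which the constant $\tfrac{10}{3}$ arises. You also overlook that the case $\Delta=-4$ is handled separately in the paper (using $R=\bigl(\begin{smallmatrix}1&1/2\\1/2&1\end{smallmatrix}\bigr)$ instead of $I$).
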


\begin{proof}
 $G^{(\K)}_k$ is a cusp form, as all its Fourier-coefficients $\beta_k(R)$ with $\det R=0$ vanish due to \eqref{gl_1} - \eqref{gl_4}. It belongs to the Maaß space by virtue of \cite{K4} and \cite{H-WK20}.  \\
 a) Because $\dim \Mcal_k(\Gamma_2)\leq 1$ for $k<10$ (cf. \cite{F}), we have $G^{(\K)}_k \equiv 0$ for $k=4,6,8$. Let $k\geq 10$ and
 \[
  S_1 = \begin{pmatrix}
         0 & 0 & P \\ 0 & -S & 0 \\ P & 0 & 0
        \end{pmatrix}, \; P= \begin{pmatrix}
			    0 & 1 \\ 1 & 0
			     \end{pmatrix}, \; S = \begin{pmatrix}
					           2 & 0 \\ 0 & |\Delta|/2
						  \end{pmatrix}, \;\;\text{resp.}\;\;
	\begin{pmatrix}
	 2 & 1 \\ 1 & (|\Delta|+1)/2
	\end{pmatrix},
 \]
if $\Delta$ is even resp. odd. Then the explicit isomorphism in \cite{KRaW} yields
\begin{gather*}\tag{12}\label{gl_12}
 G^{(\K)}_k (Z) = \sum_{\substack{h\in\Z^6,\,h_4\geq 1\\h^{tr} S_1 h = 0,\,\gcd(S_1 h)=1}}
 \biggl(-h_1\det Z + \trace\left(\begin{pmatrix}
                                  h_2 & g \\ \overline{g} & h_5
                                 \end{pmatrix} \cdot Z\right) +h_6\biggr)^{-k},
\end{gather*}
where $g=h_3 + h_4 \sqrt{\Delta}/2$ resp. $g= h_3 + h_4 (1+\sqrt{\Delta})/2$.
By virtue of \cite{K3}, V.2.5, it suffices to show that the series in \eqref{gl_12} over the absolute values at $Z= iI$ tends to $0$, as $|\Delta|\to \infty$. Hence we consider
\begin{align*}
 I_\Delta &: = \sum_h\big| h_1+h_6+i(h_2+h_5)\big|^{-k}  \\
 & = \sum_h \left(h^2_1 + h^2_6 + h^2_2 + h^2_5 +(h_3\,h_4)S\binom{h_3}{h_4}\right)^{-k/2}
\end{align*}
in view of $h^{tr} S_1 h=0$. As $h_4\geq 1$ we get
\[
 I_\Delta \leq \sum_h \left(\frac{|\Delta|-3}{2} + \frac{1}{2} h^{tr} h\right)^{-k/2} \leq (|\Delta|-3)^{-k/4} \cdot \sum_h(h^{tr} h)^{-k/4}.
\]
If $k\geq 14$ we can use the Epstein zeta function for $I^{(6)}$ in order to obtain $\lim_{|\Delta|\to \infty} I_\Delta = 0$. For $k=10$ and $k=12$ apply
\[
 a+b \geq \sqrt[4]{a} \cdot \sqrt[4]{b^3}\;\;\text{for}\;\; a,b > 0
\]
and proceed in the same way. \\
b) Let $\Delta$ be fixed. Then the Fourier coefficient $\beta_k(I)$ of $G^{(\K)}_k$ is given by
\[
 \beta_k(I) = \alpha^*_k(\Delta) +2\sum_{1\leq j\leq \sqrt{\Delta}} \alpha^*_k (\Delta-j^2)-\gamma_k(I).
\]
Due to Maaß \cite{Maa64}
\[
 0 < \gamma_k(I) = \frac{-4k B_{k-1,\chi_{-4}}}{B_k B_{2k-2}}  
\]
holds. Using Corollary \ref{corollary_1neu} this leads to               
\begin{align*}
 \beta_k(I) & \geq r_{k,\Delta}\left((2-\zeta(k-2))\biggl(|\Delta|^{k-2}+2\sum_{1\leq j\leq \sqrt{|\Delta|}}(|\Delta|-j^2)^{k-2}\biggr) -\frac{B_{k-1,\chi_{-4}} B_{k-1,\chi}}{(k-1)B_{2k-2}}\right) \\[1ex]
 & \geq r_{k,\Delta} |\Delta|^{k-3/2}\left(\frac{2-\zeta(k-2)}{\sqrt{|\Delta|}} - \frac{2^{2k-2}(k-1)!^2 \zeta(k-1)^2}{(k-1)\zeta(2k-2)(2k-2)!}\right)
\end{align*}
for $\Delta \neq -4$, if we use \eqref{gl_5} and \eqref{gl_6}. Then Stirling's formula leads to 
\begin{align*}
 \beta_k(I) & \geq r_{k,\Delta}|\Delta|^{k-3/2} \left(\frac{2-\zeta(k-2)}{\sqrt{|\Delta|}} - \frac{e^{1/6(k-1)} \zeta(k-1)^2}{\zeta(2k-2)} \sqrt{\frac{\pi}{k-1}}\right)  \\[1ex]
 & \geq r_{k,\Delta} |\Delta|^{k-3/2} \left(\frac{2-\zeta(8)}{\sqrt{|\Delta|}} - e^{1/54} \zeta(9)^2 \sqrt{\frac{\pi}{k-1}}\right),
\end{align*}
 as $k\geq 10$. The expression in the bracket is positive because
 \[
  k-1 \geq \tfrac{10}{3} |\Delta| > \pi \left(\frac{e^{1/54} \zeta(9)^2}{2-\zeta(8)}\right)^2 |\Delta|.
 \]

If $\Delta=-4$ we compute $\beta_k(R)$,  $R=\left(\begin{smallmatrix}
                                                  1 & 1/2  \\ 1/2 & 1
                                                 \end{smallmatrix}\right)$, 
and proceed in the same way (cf. \cite{NN}). 
\end{proof}

Use the description of $\gamma_k(R)$ by means of Cohen's function in \cite{EZ}, p.\,80. A comparison of the Fourier coefficients and the Hecke bound for the Fourier coefficients of cusp forms yield the following asymptotic.

\begin{corollary}\label{corollary_4} 
 If $k\geq 4$ is even and $N\in \N$, $N\equiv 0,3 \bmod 4$ one has 
 \[
  H(k-1,N) \sim \sum_{\substack{|j|\leq \sqrt{|\Delta| N} \\ j\equiv \Delta N\bmod 2}} \alpha^*_{k,\Delta} \bigl((|\Delta| N-j^2)/4\bigr)
 \]
as $N\to \infty$ for any imaginary-quadratic number field $\K$. 
\end{corollary}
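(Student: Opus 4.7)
The plan is to recognize both sides of the claimed asymptotic as Fourier coefficients of Siegel modular forms of weight $k$ evaluated at a suitably chosen primitive matrix $R \in \Sbb_2$ with $4\det R = N$, and to identify their difference with the Fourier coefficient of the cusp form $G^{(\K)}_k$ from \eqref{gl_10}, whose size is controlled by Hecke's bound. Concretely, I would fix $R = \left(\begin{smallmatrix} 1 & r/2 \\ r/2 & m \end{smallmatrix}\right) \in \Sbb_2$ of content $1$ with $4\det R = N$: namely $(r,m) = (0, N/4)$ when $N \equiv 0 \pmod 4$ and $(r,m) = (1, (N+1)/4)$ when $N \equiv 3 \pmod 4$. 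The description of the Fourier coefficients of $S_k$ in terms of Cohen's function (cf.\ \cite{EZ}, p.\,80), applied to such a primitive $R$, yields $\gamma_k(R) = H(k-1, N)$.

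Next I would expand the Fourier coefficient $\beta_k(R)$ of $E^{(\K)}_k|_{\Sbb_2}$ at $R$ via \eqref{gl_1} and the description of Fourier coefficients used in the proof of Lemma~\ref{lemma_2}. Treating the two cases $\Delta \equiv 0 \pmod 4$ and $\Delta \equiv 1 \pmod 4$ separately, the elements $T \in \Lambda_2$, $T\geq 0$, with $T + \overline{T} = 2R$ admit a parametrization by an integer $j$ with $|j| \leq \sqrt{|\Delta|N}$, and one verifies directly that $|\Delta|\det T = (|\Delta|N - j^2)/4$ together with the automatic parity condition $j \equiv \Delta N \pmod 2$. Since $R$ has a unit diagonal entry, every such $T$ satisfies $\varepsilon(T) = 1$, so the inner divisor sum in \eqref{gl_1} collapses to the single term $\alpha^*_{k,\Delta}(|\Delta|\det T)$, and summing over $j$ identifies $\beta_k(R)$ with exactly the right-hand side of the corollary.

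Finally, by Theorem~\ref{theorem_3} the form $G^{(\K)}_k = E^{(\K)}_k|_{\Sbb_2} - S_k$ is a Siegel cusp form of weight $k$, so its Fourier coefficient at $R$ equals the right-hand side of the corollary minus $H(k-1, N)$. Hecke's trivial bound for degree-$2$ Siegel cusp forms of weight $k$ gives that this difference is $\ll_k (\det R)^{k/2} = O(N^{k/2})$. Combining this with the standard lower bound $H(k-1, N) \gg_k N^{k-3/2}$, coming from the functional equation of $L(s, \chi_{-N_0})$ for the fundamental part $N_0$ of $N$ together with the explicit Cohen formula for the non-fundamental factor, and with the strict inequality $k - 3/2 > k/2$ valid for $k \geq 4$, the claimed asymptotic follows. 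The main obstacle is the bookkeeping of the parametrization of admissible $T$ in the two cases $\Delta \bmod 4$ and the verification of the parity condition; everything else reduces to invoking Hecke's bound and a standard growth estimate for $H(k-1, N)$.
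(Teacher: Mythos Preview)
Your proposal is correct and follows essentially the same approach as the paper: identify the two sides as the Fourier coefficients $\gamma_k(R)$ and $\beta_k(R)$ of $S_k$ and $E^{(\K)}_k\big|_{\Sbb_2}$ at a primitive $R$ with $4\det R=N$, and bound their difference---the Fourier coefficient of the cusp form $G^{(\K)}_k$---by the Hecke estimate against the main term of size $N^{k-3/2}$. The paper's own proof is only a two-line sketch (``comparison of the Fourier coefficients and the Hecke bound''), and your write-up supplies exactly the details it omits, including the parametrization of the admissible $T$, the collapse of the divisor sum via $\varepsilon(T)=1$, and the lower bound for $H(k-1,N)$.
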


\begin{remark}\label{remark_3} 
a) We know that $G^{(\K)}_k\equiv 0$ for $k=4,6,8$. Hence we get equality for $k=4,6,8$ in Corollary \ref{corollary_4}. 
We conjecture that $G^{(\K)}_k \not\equiv 0$ for any even $k\geq 10$ and any imaginary-quadratic number field $\K$. This has been verified for $|\Delta_\K|\leq 500$ by the authors. The Fourier coefficients are not always positive as in the proof of Theorem \ref{theorem_3} (cf. \cite{H-W22}). \\
b) The paramodular group of level $t$ can be embedded into $\Gamma^{(\K)}_2$, whenever $t$ is the norm of an element in $\oh_\K$ (cf. \cite{Koe2}). Hence one can construct paramodular cusp forms in the Maaß space in the same way.
\end{remark}

\section{The Maaß Spezialschar}

At first we characterize the vanishing of $G^{(\K)}_k$. Recall (e.g. \cite{On04}, Theorem 3.14) that for even $k\in\N$ the \emph{Shimura lifts} are maps
\begin{align*}
 Sh_{k,t-1/2} : &\; \Scal_{k-1/2}(\Gamma_0(4)) \to \Scal_{2k-2}(SL_2(\Z))  \\
 & \; f(\tau) = \sum^\infty_{n=1} a(n) e^{2\pi in\tau} \mapsto \sum^\infty_{n=1} b_t(n)e^{2\pi in\tau},
\end{align*}
for squarefree $t\in\N$, where the coefficients $b_t(n)$ are given by
\[
 \sum^{\infty}_{n=1}b_t(n) n^{-s} = \sum^{\infty}_{n=1}\left(\frac{t}{n}\right) n^{k-s-3/2} \cdot\sum^{\infty}_{n=1} a(tn^2) n^{-s}.
\]
Let $\Scal^+_{k-N_2}(\Gamma_0(4))$ be Kohnen's plus space, i.e.
\[
 a(n) = 0 \;\;\text{for}\;\; n\equiv 1,2 \bmod{4}.
\]
\begin{lemma}\label{lemma_3} 
 Let $k\geq 4$ be even and $\K=\Q(\sqrt{-t})$, $t\in \N$ squarefree. Then the following are equivalent:
 \begin{itemize}
  \item[(i)] The restriction of the Hermitian Eisenstein series $E^{(\K)}_k\big |_{\Sbb_2}$ is equal to the Siegel Eisenstein series $S_k$.
  \item[(ii)] The $t$-Shimura lift $Sh_{t,k-1/2} : \Scal^+_{k-1/2}(\Gamma_0(4)) \to \Scal_{2k-2}(SL_2(\Z))$ is identically zero.
 \end{itemize}
\end{lemma}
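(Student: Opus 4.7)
The proof proceeds via the Saito--Kurokawa correspondence combined with the Waldspurger--Kohnen--Zagier machinery for the plus space. By Theorem~\ref{theorem_3} the cusp form $G^{(\K)}_k=E^{(\K)}_k|_{\Sbb_2}-S_k$ lies in the Maa\ss\ Spezialschar, so under the Eichler--Zagier/Maa\ss\ isomorphism $\mathrm{SK}\colon\Scal^+_{k-1/2}(\Gamma_0(4))\xrightarrow{\sim}\Scal^*_k(\Gamma_2)$ it has a unique preimage $f_{\K,k}\in\Scal^+_{k-1/2}(\Gamma_0(4))$, and condition~(i) is equivalent to $f_{\K,k}\equiv 0$. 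Writing a Siegel matrix $R=\tfrac{1}{2}(T+\overline{T})$ and parameterising the fibre by $\operatorname{Im}(t)=j/(2\sqrt{|\Delta|})$, $j\in\Z$, with the appropriate parity, the Fourier expansion \eqref{gl_1} together with Cohen's classical formula $\gamma_k(R)=H(k-1,4\det R)$ yields, for primitive $R$ with $4\det R=N$,
\[
a_{f_{\K,k}}(N)=\sum_{\substack{|j|\leq\sqrt{|\Delta| N}\\ j\equiv\Delta N\,(\bmod{2})}}\alpha^*_{k,\Delta}\!\left(\tfrac{|\Delta|N-j^2}{4}\right)-H(k-1,N),
\]
the very expression whose asymptotics are described in Corollary~\ref{corollary_4}.

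Second, I would decompose $f_{\K,k}=\sum_g c_g\,g$ along a basis of Hecke eigenforms of $\Scal^+_{k-1/2}(\Gamma_0(4))$ and identify each coefficient $c_g=\langle f_{\K,k},g\rangle/\langle g,g\rangle$. Writing $\K=\Q(\sqrt{-t})$ with $t$ squarefree, the central claim is that $c_g$ is a nonzero constant multiple of $\overline{a_g(t)}$. This can be established either (a)~by a Rankin--Selberg unfolding of the Petersson pairing using the Fourier expansion above, where the sum over $j$ produces — via the Kohnen--Zagier square formula $|a_g(t)|^2=c_{k}\cdot|\Delta|^{k-3/2}\,L(k-1,Sh_{1,k-1/2}(g),\chi_{-t})/\langle g,g\rangle$ — exactly the twisted central $L$-value; or (b)~by a seesaw/doubling interpretation of the pullback $\Mcal_k(\Gamma^{(\K)}_2)\to\Mcal_k(\Gamma_2)$ applied to Eisenstein series via the dual pair $(U(2,2),U(1,1))$, which forces the relevant twist to be $\chi_\Delta=\chi_{-t}$ and the relevant cuspidal piece to be $Sh_{1}(g)$. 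Either route gives $c_g=0\iff a_g(t)=0$.

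Combining Shimura's multiplicativity $Sh_{t,k-1/2}(g)=a_g(t)\cdot Sh_{1,k-1/2}(g)$ on Hecke eigenforms $g$ with Kohnen's theorem that $Sh_{1,k-1/2}$ is a Hecke-equivariant isomorphism onto $\Scal_{2k-2}(SL_2(\Z))$, one obtains $Sh_{t,k-1/2}\equiv 0$ iff $a_g(t)=0$ for every Hecke eigenform $g$ in the plus space. Chaining with the previous step closes the equivalence (i)$\Leftrightarrow$(ii). The main technical obstacle is precisely the proportionality $c_g\propto a_g(t)$: the Rankin--Selberg route demands careful bookkeeping of the Kohnen--Shimura normalisations and of the interplay of the plus-space projector with the sums over $j$, while the seesaw route is conceptually cleaner but requires verifying that Klingen-parabolic contributions — absent from $G^{(\K)}_k$ by construction — do not reappear under the doubling.
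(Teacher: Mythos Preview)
Your overall architecture is right --- pull $G^{(\K)}_k$ back through Saito--Kurokawa to $f_{\K,k}\in\Scal^+_{k-1/2}(\Gamma_0(4))$, decompose along Hecke eigenforms $g$, and show each component vanishes iff $Sh_{t}(g)=0$ --- but you leave the decisive proportionality $c_g\propto a_g(t)$ as an acknowledged obstacle, and both routes you sketch are heavier than what the paper actually does. The paper's argument bypasses Kohnen--Zagier and seesaw entirely by invoking Lemma~\ref{lemma_1}: the restricted first Fourier--Jacobi coefficient $e^{(\K)}_{k,1}(\tau,z,z)$ is written explicitly as a sum of Jacobi--Poincar\'e series $P_{k,\lambda\overline{\lambda},\lambda+\overline{\lambda}}$ over $\lambda\in\oh_\K$. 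Since the Petersson pairing of a cusp form with a Poincar\'e series extracts a single Fourier coefficient, the pairing of $e^{(\K)}_{k,1}-e^{\mathrm{Siegel}}_{k,1}$ against the Jacobi form attached to $g$ collapses, after translation to half-integral weight, to the Dirichlet series $\sum_{n\geq 1} a_g(|\Delta|n^2)\,n^{3-2k}$. For a Hecke eigenform $g$ this is precisely the second factor of the Shimura $L$-series at $s=2k-3$; its (non)vanishing is equivalent to $Sh_t(g)\neq 0$ via the Euler product. This is your $c_g\propto a_g(t)$ for free, with no unfolding or square-root ambiguity.

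Two concrete issues with your write-up. First, the identity $Sh_{t,k-1/2}(g)=a_g(t)\cdot Sh_{1,k-1/2}(g)$ is false as stated on the plus space: since $1\not\equiv 0,3\pmod 4$, every $g\in\Scal^+_{k-1/2}(\Gamma_0(4))$ has $a_g(1)=0$, so $Sh_{1}(g)\equiv 0$ identically and your formula would force $Sh_t(g)\equiv 0$ for all $t$. What one actually uses is that for a Hecke eigenform $g$ the image $Sh_t(g)$ is either zero or a Hecke eigenform with first coefficient $b_t(1)=a_g(t)$, hence $Sh_t(g)=0\iff a_g(t)=0$; Kohnen's isomorphism then guarantees some $t$ works. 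Second, your route~(a) via Kohnen--Zagier would only produce $|c_g|^2\propto |a_g(t)|^2$, not the linear relation; you would still need a separate argument to exclude cancellation among eigencomponents, which is exactly what the Poincar\'e-series computation supplies directly.
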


\begin{proof}
 As $E^{(\K)}_k\big |_{\Sbb_2}$ is the Maaß lift of $e^{(\K)}_k(\tau,z,z)$, condition (i) holds if and only if $e^{(\K)}_k(\tau,z,z)$ is equal to the classical Jacobi-Eisenstein series in \cite{EZ}. Due to Lemma \ref{lemma_1} one has 
\[                                                                                                                                                                                                                                        e^{(\K)}_k(\tau,z,z) = \sum_{\lambda\in\oh_\K} \sum_{M:\Gamma_\infty\backslash\Gamma_1} e^{2\pi i(\tau\lambda\overline{\lambda}+z(\lambda+\overline{\lambda}))}\big |_{k,1} M,
\]
if we use the definition of the slash operator from \cite{EZ}. For each fixed $\lambda$ the series
\[
 \sum_{M:\Gamma_\infty\backslash\Gamma_1} e^{2\pi i(\tau\lambda\overline{\lambda}+z(\lambda+\overline{\lambda}))}\big |_{k,1} M = P_{k,\lambda\overline{\lambda},\lambda+\overline{\lambda}} (\tau,z)
\]
is the Jacobi-Poincaré series (cf. \cite{Will18}). Its Petersson inner product with a cusp form $\phi$ is, up to a factor depending on $k$ as well as $(\lambda-\overline{\lambda})^{6-4k}$, equal to the Fourier coefficient of the term $e^{2\pi i(\tau\lambda\overline{\lambda}+z(\lambda+\overline{\lambda}))}$. Thus (i) holds if and only if 
\[                                                                                                                                                                                                                                                                                                                                          \sum_{0\neq \lambda\in \oh_\K} (\lambda-\overline{\lambda})^{6-4k} c(\lambda\overline{\lambda}, \lambda+\overline{\lambda}) = 0                                                                                                                                                                                                                                                                                                                                          \]
holds for every Jacobi cusp form
\begin{gather*}\tag{13}\label{gl_13}
 \phi(\tau,z) = \sum_{n,r} c(n,r) e^{2\pi i(n\tau+rz)} \in\Jcal_{k,1}.
\end{gather*}
We want to rephrase this in terms of half-integral weight modular forms for $\Gamma_0(4)$ as in \cite{EZ}, \S\,5. Given \eqref{gl_13} we attach the modular form
\[
 F(\tau) = \sum_{\substack{N\in\N \\ N= 4n-r^2}} c(n,r) e^{2 \pi i N\tau} \in \Mcal_{k-1/2} (\Gamma_0(4)),
\]
satisfying Kohnens's plus condition. This bijection respects the Petersson inner products up to a trivial factor. The half-integral weight modular form attached to $P_{k,\lambda\overline{\lambda}, \lambda+\overline{\lambda}}$ is therefore the projection to the Kohnen plus space of the Poincaré series of weight $k-1/2$ and index $-(\lambda-\overline{\lambda})^2$. As $\lambda$ runs through $\oh_\K$ these values run through $|\Delta|n^2$, $n\in\N_0$. Hence (i) holds if and only if 
\[
 \sum^{\infty}_{n=1} c(|\Delta|n^2)n^{3-2k} = 0
\]
for every cusp form $F(\tau) = \sum_{N\equiv 0,3 \bmod{4}} c(N) e^{2\pi i N\tau} \in S_{k-1/2}(\Gamma_0(4))$. If such an $F$ is a Hecke eigenform, then its Shimura lift $Sh_{t,k-1/2}(F)$ ist either a Hecke eigenform with the same eigenvalues or identically $0$. In the first case the Euler product of its $L$-function implies that 
\[                                                                                                                                                                                                                                                                                                                                        
 \sum^\infty_{n=1} b_t(n) n^{3-2k} \neq 0                                                                                                                                                                                                                                                                                                                                       \]
and thus
\[
 \sum^\infty_{n=1} a(tn^2) n^{3-2k} \neq 0.
\]
Therefore (i) holds if and only if all the Hecke eigenforms in $\Scal^+_{k-1/2}(\Gamma_0(4))$ map to $0$ under $Sh_{t,k-1/2}$.
\end{proof}

This leads to our final result

\begin{corollary}\label{corollary_6} 
 Let $k\geq 10$ be even. Then the modular form
 \[
  G^{(\K)}_k = E^{(\K)}_k\big|_{\Sbb_2} -S_k
 \]
generate the subspace of cusp forms in the Maaß Spezialschar as a module over the Hecke algebra, as $\K$ varies over all imaginary-quadratic number fields.
\end{corollary}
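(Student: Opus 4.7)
The plan is to transfer the problem to Kohnen's plus space via the well-known chain of Hecke-equivariant isomorphisms: Maaß cusp forms in the Spezialschar $\cong \Jcal_{k,1}^{\text{cusp}} \cong \Scal^+_{k-1/2}(\Gamma_0(4))$, where the first is the Maaß lift (inverted) and the second is the Eichler--Zagier/Kohnen correspondence. Under this chain, each $G^{(\K)}_k$ gets identified with the Jacobi cusp form $e^{(\K)}_{k,1}(\tau,z,z) - e_{k,1}(\tau,z)$ (the restriction to the diagonal minus the classical Jacobi--Eisenstein series) and in turn with some half-integral weight cusp form $\Phi_\K \in \Scal^+_{k-1/2}(\Gamma_0(4))$. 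The claim of the corollary is then equivalent to: the Hecke submodule of $\Scal^+_{k-1/2}(\Gamma_0(4))$ generated by $\{\Phi_\K\}_{\K}$ is the whole space.

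Since $\Scal^+_{k-1/2}(\Gamma_0(4))$ satisfies multiplicity one and decomposes as an orthogonal direct sum of one-dimensional Hecke eigenspaces, it suffices to show that every Hecke eigenform $F$ has nonzero Petersson pairing with some $\Phi_\K$. Now the computation carried out inside the proof of Lemma \ref{lemma_3} identifies (up to a nonzero factor depending only on $k$) the pairing $\langle \Phi_\K, F\rangle$ with the Dirichlet value
\[
 L_\K(F) \;:=\; \sum_{n=1}^{\infty} a_F(|\Delta_\K|\,n^2)\, n^{3-2k},
\]
where $a_F(N)$ are the Fourier coefficients of $F$. For a Hecke eigenform the Shimura relations give the factorisation
\[
 L_\K(F) \;=\; a_F(|\Delta_\K|) \cdot \frac{L(\mathrm{Sh}(F),\,2k-3)}{L(k-1,\chi_{\Delta_\K})},
\]
in which both $L$-values are nonzero (the numerator because $\mathrm{Sh}(F)$ is cuspidal and $2k-3$ lies inside the domain of absolute convergence for $k\geq 10$; the denominator because $L(s,\chi_{\Delta_\K})$ has no zeros for $\Re(s)\geq 1$). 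Hence $\langle \Phi_\K, F\rangle \neq 0$ if and only if $a_F(|\Delta_\K|)\neq 0$, and the whole problem reduces to showing that for every nonzero Hecke eigenform $F\in \Scal^+_{k-1/2}(\Gamma_0(4))$ there exists a negative fundamental discriminant $\Delta = \Delta_\K$ with $a_F(|\Delta|)\neq 0$.

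The main obstacle, and the only non-elementary ingredient, is precisely this non-vanishing statement. It is a consequence of the Kohnen--Zagier formula expressing $|a_F(|\Delta|)|^2$ as a positive constant multiple of $|\Delta|^{k-3/2}\,L(\mathrm{Sh}(F),\,k-1,\chi_\Delta)$, combined with Waldspurger's theorem that these central twisted $L$-values are nonzero for infinitely many fundamental discriminants of a prescribed sign. Since $|\Delta_\K|$ ranges over all negative fundamental discriminants as $\K$ varies over imaginary-quadratic fields, the conclusion follows. A secondary technical point worth stating carefully is the Hecke-equivariance of the Maaß--Kohnen isomorphism under the identifications of Siegel, Jacobi, and half-integral weight Hecke algebras; this is classical but crucial for interpreting the Hecke module structure correctly.
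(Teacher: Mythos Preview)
Your argument is correct and follows the same architecture as the paper: transfer to $\Scal^+_{k-1/2}(\Gamma_0(4))$ via the Hecke-equivariant Maa\ss--Eichler--Zagier--Kohnen chain, then show that no Hecke eigenform $F$ is orthogonal to all the $\Phi_\K$. Both you and the paper reduce this (via the computation in Lemma~\ref{lemma_3}) to the statement that for every nonzero eigenform $F$ there is some negative fundamental discriminant $\Delta$ with $a_F(|\Delta|)\neq 0$.

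The only real difference is in how this last non-vanishing is established. You invoke the Kohnen--Zagier formula and Waldspurger's theorem on non-vanishing of central twisted $L$-values; this is correct but considerably heavier than necessary. The paper simply appeals to Kohnen's 1982 result that a suitable linear combination of Shimura lifts gives a Hecke-equivariant \emph{isomorphism} $\Scal^+_{k-1/2}(\Gamma_0(4))\overset{\sim}{\to}\Scal_{2k-2}(\Gamma_1)$, so a nonzero eigenform cannot lie in the kernel of every $Sh_{t}$. In fact an even more elementary argument is available: for a Hecke eigenform $F$, the Shimura relation expresses every coefficient $a_F(|D|m^2)$ as $a_F(|D|)$ times a factor depending only on the eigenvalues, and since every index $N\equiv 0,3\bmod 4$ has the form $|D|m^2$ with $D<0$ fundamental, the vanishing of all $a_F(|D|)$ would force $F=0$. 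Your route through Waldspurger works, but it imports deep analytic input (non-vanishing of infinitely many central values) for what is ultimately a soft statement about Fourier coefficients.
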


\begin{proof}
 Let $\Acal_{k-1/2}(\Gamma_0(4))\subseteq \Scal^+_{k-1/2} (\Gamma_0(4))$ be the subspace generated by the preimages of $E^{\Q(\sqrt{t})}_k\big|_{\Sbb_2} -S_k$ under the Maaß lift as $t$ runs through all squarefree numbers in $\N$. In the proof of Lemma \ref{lemma_3} it was shown that a Kohnen plus Hecke eigenform $F\in \Scal^+_{k-1/2} (\Gamma_0(4))$ is orthogonal to $\Acal_{k-1/2} (\Gamma_0(4))$ if and only if its $t$-Shimura lifts $Sh_{t,k-1/2}(F)$ are $0$ for all squarefree $t\in \N$. This cannot happen by a theorem of Kohnen \cite{Koh82}, which guarantees that some linear combination of Shimura lifts yields a Hecke-equivariant isomorphism
 \[
   \Scal^+_{k-1/2} (\Gamma_0(4))\overset{\sim}{\longrightarrow} \Scal_{2k-2}(\Gamma_1).
 \]
It follows that $\Acal_{k-1/2}(\Gamma_0(4))$ generates $\Scal^+_{k-1/2}(\Gamma_0(4))$ as a module over the Hecke algebra. Since Maaß lifts respect Hecke operators, we obtain the claim.
\end{proof}

\begin{remark} 
 a) Lemma \ref{lemma_3} is trivial for $k=4,6,8$ because
 \[
  \Scal_k(\Gamma_2) = \Scal_{2k-2}(\Gamma_1) = \{0\}.
 \]
 b) It is an open question whether the modular forms $G^{(\K)}_k$ span the space of cusp forms in the Maaß space of even weight $k\geq 10$, when $\K$ runs over all imaginary-quadratic number fields.
\end{remark}

\nocite{H-W}

\vspace{6ex}


 \bibliography{bibliography_krieg_2022} 
 \bibliographystyle{plain}

\end{document}